\numberwithin{equation}{section}
\newtheorem{theorem}{Theorem}
\newtheorem{proposition}{Proposition}[section]
\newtheorem{lemma}{Lemma}[section]
\newtheorem{corollary}{Corollary}[section]
\theoremstyle{remark}
\newtheorem{remark}{Remark}[section]
\theoremstyle{definition}
\newtheorem{example}{Example}[section]
\newtheorem{definition}{Definition}[section]
\numberwithin{equation}{section}
\newcommand{\N}{\mathbb{N}}
\newcommand{\R}{\mathbb{R}}
\newcommand{\closure}[1]{\overline{#1}}
\newcommand{\norm}[1]{\left\Vert #1 \right\Vert}
\newcommand{\abs}[1]{\left\vert #1 \right\vert}
\DeclareMathOperator{\divtmp}{div}
\renewcommand{\div}{\divtmp}
\DeclareMathOperator{\esssup}{ess\,sup}
\DeclareMathOperator{\essinf}{ess\,inf}
\newcommand{\st}{\,:\,}
\DeclareMathOperator{\supp}{supp}
\newcommand{\dx}{\,\mathrm{d}x}
\renewcommand{\d}{\,\mathrm{d}}
\DeclareMathOperator{\sign}{sign}
\newcommand{\eps}{\varepsilon}
\DeclareMathOperator{\Lip}{Lip}
\newcommand{\KR}{\mathrm{KR}}
\newcommand{\C}{\mathrm{C}}
\renewcommand{\L}{\mathrm{L}}
\newcommand{\W}{\mathrm{W}}
\newcommand{\M}{\mathcal M}
\newcommand{\grad}{\nabla}
\newcommand{\defeq}{:=}
\DeclareMathOperator{\diam}{diam}
\newcommand{\data}{\mu}
\newcommand{\wsto}{\overset{\ast}{\rightharpoonup}}
\newcommand{\strictto}{\overset{\mathrm{str}}{\rightharpoonup}}
\newcommand{\rev}[1]{{\color{magenta}#1}}
\renewcommand{\rev}[1]{{#1}}
\newcommand{\restr}{\mathbin{\vrule height 1.6ex depth 0pt width
0.13ex\vrule height 0.13ex depth 0pt width 1.3ex}}
\let\blx@rerun@biber\relax
\pgfplotsset{compat=1.17}
\begin{document}

\title{The inhomogeneous $p$-Laplacian equation with Neumann boundary conditions in the limit $p\to\infty$}
\author{Leon Bungert\thanks{
Hausdorff Center for Mathematics, University of Bonn, Endenicher Allee 62, Villa Maria, 53115 Bonn, Germany. \href{mailto:leon.bungert@hcm.uni-bonn.de}{leon.bungert@hcm.uni-bonn.de}
} 
}
\maketitle

\begin{abstract}
    We investigate the limiting behavior of solutions to the inhomogeneous $p$-Laplacian equation $-\Delta_p u = \data_p$ subject to Neumann boundary conditions. 
    For right hand sides which are arbitrary signed measures we show that solutions converge to a Kantorovich potential associated with the geodesic Wasserstein-1 distance. 
    In the regular case with continuous right hand sides we characterize the limit as viscosity solution to an infinity Laplacian / eikonal type equation.
    \\
    \textbf{Keywords:} $p$-Laplacian, infinity Laplacian, Wasserstein distance, optimal transport, viscosity solution
    \\
    \textbf{AMS subject classifications:} 35D30, 35D40, 49Q22
\end{abstract}

\section{Introduction}

The purpose of this paper is to study the behavior of solutions of the inhomogeneous $p$-Laplacian equation with Neumann boundary conditions as $p\to\infty$.
The precise equation we consider is
\begin{align}\label{eq:p-poisson}
\begin{cases}
    -\Delta_p u &= \data_p\quad\text{in }\Omega,\\
    \abs{\grad u}^{p-2}\frac{\partial u}{\partial\nu} &= 0,\quad\text{on }\partial\Omega,\\
    \int_\Omega \abs{u}^{p-2}u \dx &= 0,
\end{cases}
\end{align}
where $\Omega\subset\R^d$ is a Lipschitz domain and the right hand side $\data_p\in\M(\closure{\Omega})$ is a signed Radon measure which satisfies the compatibility condition $\data_p(\closure{\Omega})=0$.
We index the right hand side by $p$ in order to include the case that it varies with~$p$.
In the rest of the paper we will refer to \labelcref{eq:p-poisson} as the $p$-Poisson equation since for $p=2$ it obviously coincides with the standard Poisson equation.

We prove two convergence results, stated in \cref{sec:main_results} below.
The first one is purely variational and states that, if the right hand sides $\data_p$ converge \rev{weak-star} to a measure $\data\in\M(\closure\Omega)$ as $p\to\infty$, then weak solutions $u_p$ of \labelcref{eq:p-poisson} converge (up to a subsequence) to a Kantorovich potential $u_\infty$, which realizes the maximum in the following version of the Wasserstein-1 distance between the positive part $\data^+$ and the negative part $\data^-$ of $\data$:
\begin{align}
    \sup\left\lbrace\int_\Omega u\d\data^+ - \int_\Omega u\d\data^- \st 
     u\in\C(\closure\Omega),\;\esssup_\Omega\abs{\grad u}\leq 1
    \right\rbrace.
\end{align}
The second result uses techniques from viscosity solutions to prove that for continuous data $\data_p\in\C(\closure\Omega)$, converging uniformly to $\data\in\C(\closure\Omega)$, solutions $u_p$ converge to a viscosity solution of the following infinity Laplacian / eikonal type partial differential equation (PDE):
\begin{align}\label{eq:inf-poisson}
    \begin{cases}
    \min\left\lbrace\abs{\grad u}-1,-\Delta_\infty u\right\rbrace &= 0,\quad\text{in }\rev{\{\data>0\}},\\
    -\Delta_\infty u &= 0,\quad\text{in }\rev{\closure{\{\data\neq 0\}}^c},\\
    \max\left\lbrace1-\abs{\grad u},-\Delta_\infty u\right\rbrace &= 0,\quad\text{in }\rev{\{\data < 0\}},\\
    \max_{\closure{\Omega}} u + \min_{\closure{\Omega}} \rev{u} &=0.
    \end{cases}
\end{align}
\rev{Consequently, the only information on $\data$ which ``survives'' the limit $p\to\infty$ in the $p$-Poisson problem \labelcref{eq:p-poisson} is the support of its positive and negative part.}

Similar results have already been established for several related problems associated with the $p$-Laplace operator.
In \cite{bhattacharya1989limits}, the limit of $p$-Poisson equations with non-negative right hand side and Dirichlet boundary conditions was related to a PDE similar to \labelcref{eq:inf-poisson}.
In \cite{garcia2007neumann} the asymptotics of the homogeneous $p$-Laplacian equation with nonhomogeneous Neumann boundary conditions was investigated and related to an optimal transport problem and a viscosity PDE of infinity Laplacian type.
Furthermore, in \cite{bouchitte2003p} a vector valued version of \labelcref{eq:p-poisson} with right hand side independent of $p$ was studied.
Solutions were shown to converge to a Kantorovich potential and to solve a PDE in divergence form with measure coefficients. 
Similar results were established in \cite{mazon2014mass,evans1999differential}, however, imposing stricter regularity conditions on the right hand side in \labelcref{eq:p-poisson}.
\rev{Furthermore, in \cite{peral2009limit} the case of mixed boundary conditions and regular fixed right hand sides was related to optimal transport through a window on the boundary.}
Infinity Laplacian eigenvalue problems, their approximation with $p$-Laplacian problems, and their relation to optimal transport were investigated in \cite{juutinen1999eigenvalue,esposito2015neumann,bungert2022eigenvalue,champion2007asymptotic,champion2008infty}.

Apart from the theoretical interest in understanding the limiting behavior of solutions to \labelcref{eq:p-poisson}, our investigations are also driven by recent developments in data science.
In \cite{calder2020poisson} it was proposed to utilize the $p$-Poisson equation in order to solve semi-supervised learning tasks.
To this end, one assumes to have access to labels $g:\mathcal{O}\to\R$ of a closed subset $\mathcal{O}\subset\closure\Omega$ of the domain, in particular, $\mathcal{O}$ could be a finite collection of points.
For extending these labels from a discrete set $\mathcal{O}=\{x_i\st i=1,\dots,m\}$ with $m\in\N$ to the whole domain $\closure\Omega$ it was suggested in \cite{calder2020poisson} to solve \labelcref{eq:p-poisson} with the right hand side given by
\begin{align*}
    \data_p \equiv \data := \sum_{i=1}^m(g(x_i)-\overline{g})\delta_{x_i},\qquad \overline{g} := \frac{1}{m}\sum_{i=1}^m g(x_i),
\end{align*}
where $\delta_x\in\M(\closure\Omega)$ denotes the Dirac measure located at $x\in\closure\Omega$.
While this method, termed ``Poisson learning'', performs very well in practice, a full analysis is still pending. 
In particular, a rigorous convergence proof of the finite-dimensional approximation of Poisson learning on weighted graphs---which is used in applications---would be desirable.

The results of the present article apply to the continuum description of Poisson learning and, in particular, address the asymptotics as $p\to\infty$.
For the balanced case of two labelled classes with equal size, i.e., $g:\mathcal{O}\to\{\pm 1\}$ and $\overline{g}=0$, our main results can be interpreted as follows: 
The labelling function $u$ arising as limit of solutions to Poisson learning as $p\to\infty$ is directly connected to the solution of the optimal transport problem, which transports the empirical measure $\sum_{i\st g(x_i)=+1}\delta_{x_i}$ of the points with label $+1$ to the empirical measure $\sum_{i\st g(x_i)=-1}\delta_{x_i}$ of the points with label $-1$.

The plan of this paper is the following:
\cref{sec:prelim} reviews some important mathematical background and states our main results which are proved in \cref{sec:limit}.
In more detail, \cref{sec:cvgc} proves compactness of solutions of \labelcref{eq:p-poisson} as $p\to\infty$, \cref{sec:OT} is devoted to the optimal transport characterization of cluster points, and \cref{sec:PDE} relates them to the limiting PDE \labelcref{eq:inf-poisson}.

\section{Mathematical preliminaries and main results}
\label{sec:prelim}

\subsection{Weak solution to the \texorpdfstring{$p$}{p}-Laplacian equation}

The $p$-Laplacian for $p\in[1,\infty)$ is defined as
\begin{align}
\Delta_p u:=\div(\abs{\grad u}^{p-2}\grad u).
\end{align}
For $\C^2$-functions $u$ it admits the decomposition formula
\begin{align}\label{eq:decomposition}
    \Delta_p u = \abs{\grad u}^{p-2}\left(\Delta u + (p-2)\frac{\Delta_\infty u}{\abs{\grad u}^2}\right),
\end{align}
where $\Delta u = \div(\grad u)$ denotes the Laplacian and $\Delta_\infty u \defeq \langle\grad u,D^2u\grad u\rangle$ is called the infinity Laplacian.

Since we are interested in the case $p\to\infty$ anyway, we assume in the whole article that $p>d$, in which case the Sobolev embedding $\W^{1,p}(\Omega)\hookrightarrow\C^{0,1-\frac{d}{p}}(\closure\Omega)$ makes sure that the following concept of weak solutions to \labelcref{eq:p-poisson} is well-defined.

\begin{definition}\label{def:weak_solutions}
Let $p>d$. A function $u\in\W^{1,p}(\Omega)$ is called a weak solution to \labelcref{eq:p-poisson} if it satisfies $\int_\Omega\abs{u}^{p-2}u\dx = 0$ and
\begin{align}\label{eq:weak-formulation}
    \int_\Omega \abs{\grad u}^{p-2}\grad u\cdot\grad\phi\,\dx = \int_\Omega \phi\d\data_p,\quad\forall\phi\in\W^{1,p}(\Omega).
\end{align}
\end{definition}
It is obvious that weak solutions in the sense of \cref{def:weak_solutions} coincide with solutions of the variational problem
\begin{align}\label{eq:varprob_p}
    \min\left\lbrace \frac{1}{p}\int_\Omega\abs{\grad u}^p \d x - \int_\Omega u \d\data_p \st u\in\W^{1,p}(\Omega),\;\int_\Omega\abs{u}^{p-2}u\d x=0 \right\rbrace
\end{align}
since the Euler-Lagrange equations of this problem precisely coincide with \labelcref{eq:weak-formulation}, cf. \cite{lindqvist2017notes}.
Using standard arguments from calculus of variations it can be shown that this problem admits a unique solution for every $p>1$.
Apart from guaranteeing existence and uniqueness, this variational characterization will be essential for deriving the optimal transport characterization of the limit $\lim_{p\to\infty}u_p$ of weak solutions $u_p\in\W^{1,p}(\Omega)$.
For higher regularity statements for solutions of the $p$-Poisson equation we refer the interested reader to \cite{lindgren2017regularity}.

\subsection{Geodesic geometry}

As it turns out, the correct metric on $\Omega$ when working with \labelcref{eq:p-poisson} (or \labelcref{eq:varprob_p}) and its limit as $p\to\infty$ is not the Euclidean one but the \emph{geodesic distance}.
It is defined as
\begin{align}\label{eq:geo_dist}
    d_\Omega(x,y) := \inf 
    \left\lbrace 
    \int_0^1 \abs{\dot\gamma(t)}\d t \st \gamma\in C^1([0,1],\Omega),\,\gamma(0)=x,\,\gamma(1)=y
    \right\rbrace
\end{align}
and turns $(\Omega,d_\Omega)$ into a length space.
The geodesic distance measures the length of the shortest curve in $\Omega$ connecting two points.
If $\Omega$ is convex the curve $\gamma(t)=(1-t)x+ty$ shows $d_\Omega(x,y)=\abs{x-y}$ but in general it holds $d_\Omega(x,y)\geq\abs{x-y}$.
A derived quantity which appears naturally in the context of the Neumann problem \labelcref{eq:p-poisson} is the \emph{geodesic diameter} of $\Omega$, defined as
\begin{align}
    \diam(\Omega) := \sup_{x,y\in\Omega}d_\Omega(x,y).
\end{align}
The geodesic diameter appears as optimal constant for in the inequality
\begin{align}\label{ineq:embedding}
    \esssup_\Omega\abs{u} \leq
    \frac{\diam(\Omega)}{2}\esssup_\Omega\abs{\grad u},\quad\forall u\in\W^{1,\infty}(\Omega)\st\esssup_{\Omega}u+\essinf_{\Omega}u=0
\end{align}
and as first non-trivial Neumann eigenvalue of the infinity Laplacian \cite{rossi2016first,esposito2015neumann}, given by
\begin{align}
    \lambda_\infty := \frac{2}{\diam(\Omega)}.
\end{align}
One can use the geodesic distance to define the geodesic Lipschitz constant of $u\in\C(\closure\Omega)$ as
\begin{align}\label{eq:geodesic_Lip_const}
    \Lip_\Omega(u) := \sup_{\substack{x,y\in\Omega\\x\neq y}}\frac{\abs{ u(x)- u(y)}}{d_\Omega(x,y)}.
\end{align}
With this at hand one can introduce a geodesic version of the Wasserstein-1 distance:
\begin{align}\label{eq:geodesic_wasserstein_distance}
    W_{1,\Omega}(\data^+,\data^-) := \sup\left\lbrace\int_\Omega u\d\data^+ - \int_\Omega u\d\data^- \st 
     u\in\C(\closure\Omega),\;\Lip_\Omega(u)\leq 1
    \right\rbrace.
\end{align}
Note that, as stated in \cite[page 269]{brezis2011functional}, any function $ u\in\W^{1,\infty}(\Omega)$ has a continuous representative (denoted by the same symbol) and it holds
\begin{align}
    \abs{ u(x)- u(y)}\leq \esssup_\Omega\abs{\grad u} d_\Omega(x,y),\quad\forall x,y\in\Omega.
\end{align}
This shows that $\Lip_\Omega(u)\leq\esssup_\Omega\abs{\grad u}$.
Furthermore, since for points $x,y$ that lie in a ball which is fully contained in $\Omega$ it holds $d_\Omega(x,y)=\abs{x-y}$, it is easily seen (see \cite[page 23]{bhattacharya1989limits}) that in fact
\begin{align}\label{eq:geodesic_Lip}
    \Lip_\Omega(u)=\esssup_\Omega\abs{\grad u}.
\end{align}

\subsection{\rev{Weak-star} convergence of measures}

For measuring the convergence of the right hand side measures $\data_p$ in \labelcref{eq:p-poisson} as $p\to\infty$ we \rev{utilize weak-star} convergence of measures.

\begin{definition}[\rev{Weak-star} convergence of measures]
As sequence of Radon measures $(\data_n)_{n\in\N}\subset\M(\closure{\Omega})$ is said to converge \rev{weak-star} to $\data\in\M(\closure{\Omega})$ (written $\data_n\wsto\data$) if
\begin{align}
    \lim_{n\to\infty}\int_\Omega u \d\data_n =\int_\Omega u\d\data\quad\forall u\in \C(\closure{\Omega}).
\end{align}
\end{definition}
\begin{remark}[Smooth approximation]
It is easy to see that any Radon measure can be approximated in the \rev{weak-star} topology by convolving it with a mollifier.
\end{remark}

\subsection{Main results}
\label{sec:main_results}

The following are our main results. 
The proof of \cref{thm:wasserstein} can be found in \cref{sec:OT} and the one of \cref{thm:viscosity_PDE}, along with precise definitions of the notion of viscosity solutions and some corollaries, in \cref{sec:PDE}.

\begin{theorem}\label{thm:wasserstein}
Assume that $\data_p\wsto\data$ in $\M(\closure{\Omega})$ as $p\to\infty$ and let $\data=\data^+ - \data^-$, with non-negative measures $\data^\pm\in\M(\closure{\Omega})$, be the Jordan decomposition of $\data$.
Then (up to a subsequence) weak solutions $u_p\in\W^{1,p}(\Omega)$ of \labelcref{eq:p-poisson} uniformly converge to a function $u_\infty\in\W^{1,\infty}(\Omega)$ which satisfies
\begin{align*}
    W_{1,\Omega}(\data^+,\data^-) = \int_\Omega u_\infty\d\data^+ - \int_\Omega u_\infty\d\data^-.
\end{align*}
\end{theorem}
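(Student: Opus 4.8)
The plan is to combine a priori estimates coming from the energy identity and the Morrey embedding with the optimal transport duality and an energy–comparison (direct method) argument. Throughout, write $\langle w,\nu\rangle\defeq\int_\Omega w\d\nu^+-\int_\Omega w\d\nu^-=\int_\Omega w\d\nu$ for $w\in\C(\closure\Omega)$ and a signed measure $\nu$; recall that $\data_p\wsto\data$ forces $\sup_p\norm{\data_p}_{\M(\closure\Omega)}<\infty$ by Banach--Steinhaus, and that $\data_p(\closure\Omega)=0$ makes $w\mapsto\langle w,\data_p\rangle$ invariant under adding constants to $w$. \textbf{Step 1 (compactness).} Testing \labelcref{eq:weak-formulation} with $\phi=u_p$ gives $\int_\Omega\abs{\grad u_p}^p\dx=\langle u_p,\data_p\rangle$. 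Since $\int_\Omega\abs{u_p}^{p-2}u_p\dx=0$, the continuous function $u_p$ changes sign (the case $\data_p=0$, where $u_p\equiv0$, is trivial and can be set aside), so $\essinf_\Omega u_p\le0\le\esssup_\Omega u_p$ and hence $\norm{u_p}_{\L^\infty(\Omega)}\le\esssup_\Omega u_p-\essinf_\Omega u_p$. Bounding this oscillation by Morrey's inequality, $\esssup_\Omega u_p-\essinf_\Omega u_p\le C_p\norm{\grad u_p}_{\L^p(\Omega)}$ with $C_p$ the Morrey constant times a power of the Euclidean diameter of $\Omega$, a quantity that stays bounded as $p\to\infty$; combining with the identity above yields $\norm{\grad u_p}_{\L^p(\Omega)}^{p-1}\le C_p\norm{\data_p}_{\M(\closure\Omega)}$, so $\limsup_{p\to\infty}\norm{\grad u_p}_{\L^p(\Omega)}\le1$ after taking $(p-1)$-th roots. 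In particular $\sup_p\norm{u_p}_{\L^\infty(\Omega)}<\infty$ and, again by Morrey's inequality (using $1-d/p\to1$), the family $(u_p)$ admits a modulus of continuity that can be chosen uniformly in $p$; by Arzelà--Ascoli we pass to a subsequence (not relabeled) with $u_p\to u_\infty$ uniformly on $\closure\Omega$ for some $u_\infty\in\C(\closure\Omega)$.

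\textbf{Step 2 (the limit is admissible).} For fixed $q<\infty$ and $p>q$, Hölder's inequality gives $\norm{\grad u_p}_{\L^q(\Omega)}\le\abs{\Omega}^{1/q-1/p}\norm{\grad u_p}_{\L^p(\Omega)}$, so $(u_p)_{p>q}$ is bounded in $\W^{1,q}(\Omega)$; up to a further subsequence $\grad u_p\wto\grad u_\infty$ in $\L^q(\Omega;\R^d)$ (the weak $\W^{1,q}$-limit is necessarily $u_\infty$, by uniform convergence), whence $\norm{\grad u_\infty}_{\L^q(\Omega)}\le\liminf_{p\to\infty}\norm{\grad u_p}_{\L^q(\Omega)}\le\abs{\Omega}^{1/q}$. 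Letting $q\to\infty$ gives $\esssup_\Omega\abs{\grad u_\infty}\le1$, so $u_\infty\in\W^{1,\infty}(\Omega)$ and, by \labelcref{eq:geodesic_Lip}, $\Lip_\Omega(u_\infty)\le1$. Thus $u_\infty$ is admissible in \labelcref{eq:geodesic_wasserstein_distance} and $\langle u_\infty,\data\rangle\le W_{1,\Omega}(\data^+,\data^-)$.

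\textbf{Step 3 (optimality).} The supremum in \labelcref{eq:geodesic_wasserstein_distance} is attained: after the harmless normalization $\min_{\closure\Omega}u=0$ the competitors with $\Lip_\Omega(u)\le1$ form a bounded, equicontinuous, hence (by Arzelà--Ascoli) compact subset of $\C(\closure\Omega)$ on which $u\mapsto\langle u,\data\rangle$ is continuous. Let $v^\ast$ be a maximizer; by \labelcref{eq:geodesic_Lip}, $v^\ast\in\W^{1,\infty}(\Omega)\subset\W^{1,p}(\Omega)$ with $\abs{\grad v^\ast}\le1$ a.e. Since $\data_p$ has zero mass, subtracting a suitable constant shows the constraint $\int_\Omega\abs{u}^{p-2}u\dx=0$ in \labelcref{eq:varprob_p} can be dropped, so $u_p$ minimizes $u\mapsto\frac1p\int_\Omega\abs{\grad u}^p\dx-\langle u,\data_p\rangle$ over all of $\W^{1,p}(\Omega)$. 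Comparing with $v^\ast$ gives $\frac1p\int_\Omega\abs{\grad u_p}^p\dx-\langle u_p,\data_p\rangle\le\frac1p\int_\Omega\abs{\grad v^\ast}^p\dx-\langle v^\ast,\data_p\rangle\le\frac{\abs{\Omega}}{p}-\langle v^\ast,\data_p\rangle$, and discarding the nonnegative first term yields $\langle u_p,\data_p\rangle\ge\langle v^\ast,\data_p\rangle-\abs{\Omega}/p$. Using $\norm{u_p-u_\infty}_{\L^\infty}\to0$, $\sup_p\norm{\data_p}_{\M(\closure\Omega)}<\infty$ and $\data_p\wsto\data$ one checks by a routine estimate that $\langle u_p,\data_p\rangle\to\langle u_\infty,\data\rangle$ and $\langle v^\ast,\data_p\rangle\to\langle v^\ast,\data\rangle=W_{1,\Omega}(\data^+,\data^-)$; passing to the limit gives $\langle u_\infty,\data\rangle\ge W_{1,\Omega}(\data^+,\data^-)$. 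Together with Step 2 this is the asserted identity.

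The main obstacle is Step 1: one must ensure the embedding constants entering the a priori estimate (the Morrey constant and the diameter factor) do not grow fast enough to spoil the convergence of their $(p-1)$-th roots to $1$, and the nonlinear normalization $\int_\Omega\abs{u_p}^{p-2}u_p\dx=0$ must be handled with some care — although the observations that it forces $u_p$ to change sign and that $\data_p$ carries zero mass reduce everything to the standard linear Poincaré/Morrey machinery. The remaining steps are then a fairly standard interplay between $\Gamma$-convergence-type energy comparison and Kantorovich duality.
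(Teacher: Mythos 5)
Your proposal is correct and follows essentially the same route as the paper: a $p$-uniform Morrey/energy estimate for compactness, lower semicontinuity of the $\L^q$-gradient norms as $q\to\infty$ to show $u_\infty$ is admissible, and comparison in the variational problem \labelcref{eq:varprob_p} combined with passing to the limit in the duality pairing $\langle\cdot,\data_p\rangle$ for optimality. The only cosmetic differences are that you obtain uniform convergence directly via a uniform Morrey modulus and Arzelà--Ascoli rather than through weak $\W^{1,m}$-compactness and the compact Sobolev embedding, and that you compare against an extremal Kantorovich potential $v^\ast$ (whose existence requires its own small compactness argument) instead of against an arbitrary $1$-Lipschitz competitor followed by a supremum at the end, as the paper does.
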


\begin{theorem}\label{thm:viscosity_PDE}
If $\data_p\in\C(\closure{\Omega})$ converges uniformly to $\data\in\C(\closure{\Omega})$ as $p\to\infty$, then (up to a subsequence) weak solutions $u_p\in\W^{1,p}(\Omega)$ of \labelcref{eq:p-poisson} converge uniformly to a function $u_\infty\in\W^{1,\infty}(\Omega)$ which is a viscosity solution of 
\begin{align}\label{eq:Omega_pmz}
    \begin{cases}
    \min\left\lbrace\abs{\grad u}-1,-\Delta_\infty u\right\rbrace &= 0,\quad\text{in }\rev{\rev{\{\data>0\}}},\\
    -\Delta_\infty u &= 0,\quad\text{in }\rev{\rev{\closure{\{\data\neq 0\}}^c}},\\
    \max\left\lbrace1-\abs{\grad u},-\Delta_\infty u\right\rbrace &= 0,\quad\text{in }\rev{\rev{\{\data<0\}}},\\
    \max_{\closure{\Omega}} u + \min_{\closure{\Omega}} \rev{u} &=0.
    \end{cases}    
\end{align}
\end{theorem}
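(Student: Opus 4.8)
The plan is to pass to the limit $p\to\infty$ directly in the viscosity formulation of the $p$-Poisson equation \labelcref{eq:p-poisson}, using the decomposition formula \labelcref{eq:decomposition} to keep track of $\Delta_p$, and to read off the normalisation from the constraint $\int_\Omega\abs{u_p}^{p-2}u_p\dx=0$. First I invoke the compactness established in \cref{sec:cvgc}: along a subsequence (not relabelled) $u_p\to u_\infty$ uniformly on $\closure\Omega$ with $u_\infty\in\W^{1,\infty}(\Omega)$, so that $u_\infty$ is an admissible candidate for \labelcref{eq:Omega_pmz}. Next, since $\data_p\in\C(\closure\Omega)$, every weak solution $u_p$ is a viscosity solution of $-\Delta_p u=\data_p$ in $\Omega$ satisfying the Neumann condition in the generalised (Barles-type) viscosity sense; this equivalence is standard for quasilinear equations with continuous right hand side, and I would cite it or reproduce the short argument.

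The core is the interior stability argument. Fix $x_0\in\Omega$ and $\phi\in\C^2$ such that $u_\infty-\phi$ has a strict local maximum at $x_0$ (the minimum case is symmetric). By uniform convergence there are $x_p\to x_0$ at which $u_p-\phi$ has a local maximum, whence $-\Delta_p\phi(x_p)\le\data_p(x_p)$. If $\grad\phi(x_0)\neq0$, then $\grad\phi(x_p)\neq0$ for large $p$ and dividing \labelcref{eq:decomposition} by $(p-2)\abs{\grad\phi(x_p)}^{p-4}>0$ gives
\begin{align*}
    -\frac{\abs{\grad\phi(x_p)}^{2}\,\Delta\phi(x_p)}{p-2}-\Delta_\infty\phi(x_p)\;\le\;\frac{\data_p(x_p)}{(p-2)\abs{\grad\phi(x_p)}^{p-4}}.
\end{align*}
Suppose $x_0\in\{\data>0\}$. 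If $\abs{\grad\phi(x_0)}>1$ the right hand side tends to $0$ (denominator blows up, numerator stays bounded), so if moreover $-\Delta_\infty\phi(x_0)>0$ the left hand side is eventually positive, a contradiction; together with the trivial case $\grad\phi(x_0)=0$ this yields $\min\{\abs{\grad\phi(x_0)}-1,-\Delta_\infty\phi(x_0)\}\le0$, i.e.\ the subsolution property. For the supersolution property one tests from below, so that $-\Delta_p\phi(x_p)\ge\data_p(x_p)>0$: then the right hand side above is strictly positive, forcing in the limit $-\Delta_\infty\phi(x_0)\ge0$; it would blow up if $\abs{\grad\phi(x_0)}<1$, so $\abs{\grad\phi(x_0)}\ge1$; and $\grad\phi(x_0)=0$ is impossible since then $-\Delta_p\phi(x_p)\to0$ (as $\grad\phi(x_p)\to0$ and $p>2$) would contradict $-\Delta_p\phi(x_p)\ge\data_p(x_p)\to\data(x_0)>0$. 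This gives the first line of \labelcref{eq:Omega_pmz}. On $\closure{\{\data\neq0\}}^c$ the right hand side $\data_p$ converges to $0$ uniformly near $x_0$, and the same rescaling — in the spirit of the classical passage from $p$-harmonic to $\infty$-harmonic functions \cite{bhattacharya1989limits} — gives $-\Delta_\infty u_\infty=0$; the point needing extra care is the regime $\abs{\grad\phi(x_0)}\le1$, where the rescaled right hand side need not vanish and one keeps the unrescaled inequality and uses that the $\Delta_\infty$ term dominates. Finally, the third line of \labelcref{eq:Omega_pmz} follows by applying the already-proven first line to $v_p:=-u_p$, which solves \labelcref{eq:p-poisson} with right hand side $-\data_p$ and the same normalisation: on $\{-\data>0\}=\{\data<0\}$ we obtain $\min\{\abs{\grad v_\infty}-1,-\Delta_\infty v_\infty\}=0$, and since $\abs{\grad v_\infty}=\abs{\grad u_\infty}$ and $\Delta_\infty v_\infty=-\Delta_\infty u_\infty$ this is exactly $\max\{1-\abs{\grad u_\infty},-\Delta_\infty u_\infty\}=0$.

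For a boundary point $x_0\in\partial\Omega$ one repeats the touching argument with the generalised Neumann condition: the viscosity inequality for $u_p$ combines the interior expression $-\Delta_p\phi(x_p)-\data_p(x_p)$ with the boundary expression $\abs{\grad\phi(x_p)}^{p-2}\frac{\partial\phi}{\partial\nu}(x_p)$ in a $\min$ (for subsolutions) or $\max$ (for supersolutions); since the sign of the boundary term equals that of $\frac{\partial\phi}{\partial\nu}(x_0)$ whenever $\grad\phi(x_0)\neq0$, passing to the limit $p\to\infty$ produces precisely the viscosity boundary condition accompanying \labelcref{eq:Omega_pmz} in \cref{sec:PDE}. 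It remains to pass to the limit in $\int_\Omega\abs{u_p}^{p-2}u_p\dx=0$: if, say, $\max_{\closure\Omega}u_\infty>-\min_{\closure\Omega}u_\infty$, then on the non-empty open (hence positive-measure) set $\{u_\infty>\max_{\closure\Omega}u_\infty-\eta\}$ the integrand is eventually at least $(\max_{\closure\Omega}u_\infty-\eta-\norm{u_p-u_\infty}_\infty)^{p-1}$, and for $\eta$ small this outgrows $(-\min_{\closure\Omega}u_\infty+\norm{u_p-u_\infty}_\infty)^{p-1}\abs{\Omega}$, which bounds $\int_{\{u_p<0\}}\abs{u_p}^{p-1}\dx$, contradicting the cancellation of the two contributions; the reverse inequality follows by the same reasoning applied to $-u_p$. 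Hence $\max_{\closure\Omega}u_\infty+\min_{\closure\Omega}u_\infty=0$.

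The main obstacle is the interplay between the boundary condition and the fact that the three equations in \labelcref{eq:Omega_pmz} are posed on relatively open subsets of $\closure\Omega$ that may meet $\partial\Omega$: one has to set up the generalised Neumann boundary condition for a merely Lipschitz domain (whose outward normal exists only $\mathcal H^{n-1}$-a.e.) with some care, verify that finite-$p$ weak solutions satisfy it in the viscosity sense, and check its stability as $p\to\infty$. A secondary but genuine subtlety, already visible in the interior on $\closure{\{\data\neq0\}}^c$, is that $\data_p$ is only small there rather than identically zero, so the clean rescaling argument must be complemented by an unrescaled estimate when $\abs{\grad\phi}\le1$.
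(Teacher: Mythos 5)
Your interior argument --- compactness via \cref{prop:cvgc_p_to_infty}, the passage from weak to viscosity solutions of \labelcref{eq:p-poisson}, the touching-point argument combined with the decomposition \labelcref{eq:decomposition}, division by $(p-2)\abs{\grad\phi(x_p)}^{p-4}$, and the case distinction on $\abs{\grad\phi(x_0)}$ --- is essentially the paper's proof. Deriving the third line of \labelcref{eq:Omega_pmz} from the first via the symmetry $u\mapsto-u$, $\data\mapsto-\data$ is a tidy repackaging of the paper's direct treatment of $\{\data<0\}$, and your hands-on estimate of the two level-set integrals for the normalisation $\max_{\closure\Omega}u_\infty+\min_{\closure\Omega}u_\infty=0$ is an equivalent substitute for the paper's appeal to \cref{lem:convergence_norms} with $k=1$. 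Two issues remain.

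First, the boundary discussion is both unnecessary and incorrect, and you have misidentified it as the main obstacle. The limiting problem \labelcref{eq:Omega_pmz} carries \emph{no} boundary condition: \cref{def:visc_inf} tests only interior points $x_0\in\Omega$ together with the normalisation, and the paper explicitly remarks that the weak viscosity Neumann condition does \emph{not} survive the limit $p\to\infty$ --- see \cref{ex:p-laplacians}, where $u_\infty(x)=x$ has normal derivative $\pm1\neq0$ at the endpoints. Your claim that ``the sign of the boundary term equals that of $\frac{\partial\phi}{\partial\nu}(x_0)$'' and that the limit therefore ``produces precisely the viscosity boundary condition'' overlooks that the boundary inequality for finite $p$ is a $\min$ (resp.\ $\max$) of the boundary expression with the interior expression $-\Delta_p\phi-\data_p$; as $p\to\infty$ the factor $\abs{\grad\phi(x_0)}^{p-2}$ degenerates to $0$ or $+\infty$ and the interior branch can always absorb the inequality, so no Neumann information passes to $u_\infty$. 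This part of the argument should be deleted, not repaired.

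Second, on $\closure{\{\data\neq0\}}^c$ you correctly isolate the delicate sub-case $0<\abs{\grad\phi(x_0)}\leq1$, where the rescaled right-hand side $\data_{p}(x_p)\big/\bigl((p-2)\abs{\grad\phi(x_p)}^{p-4}\bigr)$ is an indeterminate quotient of two vanishing quantities. Your proposed remedy --- keep the unrescaled inequality and ``use that the $\Delta_\infty$ term dominates'' --- does not close this: in the unrescaled inequality every term on the left also tends to zero, so nothing dominates a right-hand side $\data_p(x_p)\to0$ whose rate of decay is not controlled by uniform convergence alone. The paper's own Case 2 is equally terse at this point, so this is not a gap relative to the published argument, but the patch you offer is not a patch; the only sub-case that the rescaling handles cleanly is $\abs{\grad\phi(x_0)}>1$ (and the trivial case $\grad\phi(x_0)=0$).
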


\section{Limit behavior as \texorpdfstring{$p\to\infty$}{p to infinity}}
\label{sec:limit}

\subsection{Convergence of solutions}
\label{sec:cvgc}

In this section we show that if the sequence of right hand sides $\data_p$ in \labelcref{eq:p-poisson} has uniformly bounded mass, then the sequence of solutions $(u_p)_{p>0}$ admits a convergent subsequence.

To this end we first derive an upper bound for the $p$-Dirichlet energy $\int_\Omega\abs{\grad u}^p\dx$ in terms of the data which will then allow us to deduce convergence.

\begin{proposition}\label{prop:limsup_ineq}
Let $u_p\in\W^{1,p}(\Omega)$ be a weak solution of \labelcref{eq:p-poisson} with data $\data_p\in\M(\closure{\Omega})$.
Then it holds
\begin{align*}
    \limsup_{p\to\infty}\left(\int_\Omega\abs{\grad u_p}^p\dx\right)^{1-\frac{1}{p}} \leq \frac{\diam(\Omega)}2 \limsup_{p\to\infty}\abs{\data_p}(\closure{\Omega}).
\end{align*}
\end{proposition}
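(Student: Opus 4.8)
The plan is to combine the energy identity for $u_p$ with a single scalar ``$p$-Poincaré'' constant of the domain, and then to study how that constant behaves as $p\to\infty$.

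First I would insert the admissible test function $\phi=u_p$ into \eqref{eq:weak-formulation} to obtain the energy identity $\int_\Omega\abs{\grad u_p}^p\dx=\int_\Omega u_p\d\data_p$; since $p>d$ the solution $u_p$ has a continuous representative on $\closure\Omega$, so the right-hand side is meaningful. Using the compatibility condition $\data_p(\closure\Omega)=0$ I may replace $u_p$ by $u_p-\min_{\closure\Omega}u_p$ without altering $\int_\Omega u_p\d\data_p$; bounding $0\le u_p-\min_{\closure\Omega}u_p\le\omega_p$ with $\omega_p:=\max_{\closure\Omega}u_p-\min_{\closure\Omega}u_p$, and using $\data_p^+(\closure\Omega)=\data_p^-(\closure\Omega)=\tfrac12\abs{\data_p}(\closure\Omega)$, this yields
\begin{align*}
    \int_\Omega\abs{\grad u_p}^p\dx=\int_\Omega u_p\d\data_p\le \tfrac12\,\omega_p\,\abs{\data_p}(\closure\Omega).
\end{align*}

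Next I would introduce the domain-dependent constant
\begin{align*}
    \kappa_p:=\sup\left\{\frac{\max_{\closure\Omega}v-\min_{\closure\Omega}v}{\left(\int_\Omega\abs{\grad v}^p\dx\right)^{1/p}}\st v\in\W^{1,p}(\Omega),\ \grad v\not\equiv0\right\},
\end{align*}
which is finite for $p>d$ by Morrey's embedding together with the Poincaré--Wirtinger inequality. By definition $\omega_p\le\kappa_p\bigl(\int_\Omega\abs{\grad u_p}^p\dx\bigr)^{1/p}$, so plugging this into the estimate above and dividing by $\bigl(\int_\Omega\abs{\grad u_p}^p\dx\bigr)^{1/p}$ (the degenerate case $\grad u_p\equiv0$ forces $u_p\equiv0$ and is trivial) gives
\begin{align*}
    \left(\int_\Omega\abs{\grad u_p}^p\dx\right)^{1-\frac1p}\le\tfrac12\,\kappa_p\,\abs{\data_p}(\closure\Omega).
\end{align*}
Thus the proposition reduces to the claim $\limsup_{p\to\infty}\kappa_p\le\diam(\Omega)$, which is closely related to the convergence of the first nontrivial Neumann $p$-eigenvalue to $\lambda_\infty=2/\diam(\Omega)$.

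This last claim is where I expect the real work to be, and it is the main obstacle: one cannot simply invoke \eqref{ineq:embedding}, since the $\L^p$-norm of a gradient can be far smaller than its $\L^\infty$-norm, so the sharp $\W^{1,\infty}$-inequality does not transfer to $\W^{1,p}$ for free. I would argue by contradiction and compactness. If $\limsup_{p\to\infty}\kappa_p>\diam(\Omega)$, choose $\eps>0$, $p_k\to\infty$, and $v_k\in\W^{1,p_k}(\Omega)$ normalised so that $\min_{\closure\Omega}v_k=0$, $\max_{\closure\Omega}v_k=1$ and $\norm{\grad v_k}_{\L^{p_k}(\Omega)}<(\diam(\Omega)+\eps)^{-1}$. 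For each fixed $q>d$ and all large $k$, Hölder's inequality gives $\norm{\grad v_k}_{\L^q(\Omega)}\le\abs{\Omega}^{\frac1q-\frac1{p_k}}\norm{\grad v_k}_{\L^{p_k}(\Omega)}$, and since $0\le v_k\le1$ the sequence $(v_k)_k$ is bounded in $\W^{1,q}(\Omega)$; a diagonal extraction over $q=d+1,d+2,\dots$, using the compact embedding $\W^{1,q}(\Omega)\hookrightarrow\hookrightarrow\C(\closure\Omega)$, produces a subsequence converging uniformly on $\closure\Omega$ and weakly in every $\W^{1,q}(\Omega)$ to some $v$. Uniform convergence preserves $\max_{\closure\Omega}v-\min_{\closure\Omega}v=1$, while weak lower semicontinuity gives $\norm{\grad v}_{\L^q(\Omega)}\le\abs{\Omega}^{1/q}(\diam(\Omega)+\eps)^{-1}$ for every $q>d$; letting $q\to\infty$ yields $v\in\W^{1,\infty}(\Omega)$ with $\esssup_\Omega\abs{\grad v}\le(\diam(\Omega)+\eps)^{-1}$. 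Applying \eqref{ineq:embedding} to $v$ after subtracting $\tfrac12(\esssup_\Omega v+\essinf_\Omega v)$ then forces $\max_{\closure\Omega}v-\min_{\closure\Omega}v\le\diam(\Omega)(\diam(\Omega)+\eps)^{-1}<1$, a contradiction. Combining $\limsup_{p\to\infty}\kappa_p\le\diam(\Omega)$ with the displayed estimate and the elementary fact $\limsup_p(a_pb_p)\le(\limsup_p a_p)(\limsup_p b_p)$ for nonnegative sequences completes the proof.
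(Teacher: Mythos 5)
Your proof is correct, and while it starts from the same energy identity $\int_\Omega\abs{\grad u_p}^p\dx=\int_\Omega u_p\d\data_p$, it diverges from the paper in the key analytic input. The paper bounds $\esssup_\Omega\abs{u_p}$ via the constant $\sigma_p=\inf\{\int_\Omega\abs{\grad u}^p\dx/\esssup_\Omega\abs{u}^p\}$ over the $p$-mean-zero class and then simply \emph{cites} the known convergence $\sqrt[p]{\sigma_p}\to 2/\diam(\Omega)$ from the Neumann infinity-eigenvalue literature; you instead exploit the compatibility condition $\data_p(\closure\Omega)=0$ to trade $\esssup_\Omega\abs{u_p}$ for half the oscillation (which cleanly sidesteps the $p$-dependent normalization altogether), and then prove the needed asymptotic bound $\limsup_{p\to\infty}\kappa_p\leq\diam(\Omega)$ from scratch by a contradiction-and-compactness argument (Hölder to get $\W^{1,q}$-bounds for every fixed $q>d$, diagonal extraction, weak lower semicontinuity, and finally the sharp $\W^{1,\infty}$-inequality \labelcref{ineq:embedding}). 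Your route is longer but self-contained modulo \labelcref{ineq:embedding} --- which is also the fact underlying the cited eigenvalue convergence --- and you correctly flag the one genuinely non-trivial point, namely that the sharp $\L^\infty$-gradient inequality does not transfer to $\L^p$ for free; the compactness argument resolves this properly, and the remaining details (handling the degenerate case $\grad u_p\equiv 0$, the product rule for limsups of nonnegative sequences, and the trivial case $\limsup_p\abs{\data_p}(\closure\Omega)=\infty$) are all in order.
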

\begin{proof}
Choosing $\phi=u_p$ in \labelcref{eq:weak-formulation} and using Hölder's and Morrey's inequalities yield
\begin{align*}
    \int_\Omega \abs{\grad u_p}^p\dx 
    &= 
    \int_\Omega u_p\d\data_p
    \leq
    \abs{\data_p}(\closure{\Omega})\esssup_\Omega\abs{u_p}
    \\
    &\leq \frac{1}{\sqrt[p]{\sigma_p}}\abs{\data_p}(\closure{\Omega}) \left(\int_\Omega\abs{\grad u_p}^p\dx\right)^\frac{1}{p},
\end{align*}
where the optimal constant for the Morrey inequality is defined as
\begin{align*}
    \sigma_p := 
    \inf\left\lbrace\frac{\int_\Omega\abs{\grad u}^p\dx}{\esssup_\Omega\abs{u}^p}\st u\in\W^{1,p}(\Omega),\,\int_\Omega\abs{u}^{p-2}u\dx = 0\right\rbrace.
\end{align*}
Using that $p\mapsto\sqrt[p]{\sigma_p}$ converges to the value $\frac{2}{\diam(\Omega)}\in(0,\infty)$, which is the first non-trivial Neumann eigenvalue of the infinity Laplacian \cite{rossi2016first,esposito2015neumann}, concludes the proof.
\end{proof}

Before proving the convergence theorem we need the following important lemma.

\begin{lemma}\label{lem:convergence_norms}
Let $u_p\subset \L^p(\Omega)$ converge to $u_\infty\in L^\infty(\Omega)$ uniformly on $\Omega$.
Then for every $0\leq k \leq p-1$ it holds
\begin{align}
    \lim_{p\to\infty}\left(\int_\Omega\abs{u_p}^{p-k}\d x\right)^\frac{1}{p-k} = \esssup_\Omega\abs{u_\infty}.
\end{align}
\end{lemma}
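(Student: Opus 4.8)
The plan is to sandwich the quantity $\left(\int_\Omega \abs{u_p}^{p-k}\dx\right)^{1/(p-k)}$ between two bounds that both converge to $\esssup_\Omega\abs{u_\infty}$. Write $q_p := p-k$, so that $q_p \to \infty$ as $p\to\infty$ (since $k$ is fixed relative to this limiting process, or at worst $k\le p-1$ keeps $q_p\ge 1$; in fact for the conclusion to make sense one reads $k$ as fixed). First I would set $M := \esssup_\Omega\abs{u_\infty}$ and $\eps_p := \norm{u_p - u_\infty}_{\L^\infty(\Omega)} \to 0$ by hypothesis, so that $\abs{u_p} \le M + \eps_p$ a.e. on $\Omega$.

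For the upper bound, I would estimate directly
\begin{align*}
    \left(\int_\Omega \abs{u_p}^{q_p}\dx\right)^{1/q_p} \le (M+\eps_p)\,\abs{\Omega}^{1/q_p},
\end{align*}
and since $\abs{\Omega}<\infty$ (as $\Omega$ is a bounded Lipschitz domain) we have $\abs{\Omega}^{1/q_p}\to 1$, hence $\limsup_{p\to\infty}\left(\int_\Omega \abs{u_p}^{q_p}\dx\right)^{1/q_p} \le M$. For the lower bound, I would fix any $\eta \in (0, M)$ (assuming $M>0$; the case $M=0$ follows trivially from the upper bound since the quantity is nonnegative). By definition of the essential supremum, the set $A_\eta := \{x\in\Omega : \abs{u_\infty(x)} > M - \eta\}$ has positive Lebesgue measure, $\abs{A_\eta}>0$. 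On $A_\eta$, for $p$ large enough that $\eps_p < \eta$, we get $\abs{u_p} \ge \abs{u_\infty} - \eps_p > M - 2\eta$, so
\begin{align*}
    \left(\int_\Omega \abs{u_p}^{q_p}\dx\right)^{1/q_p} \ge \left(\int_{A_\eta} \abs{u_p}^{q_p}\dx\right)^{1/q_p} \ge (M - 2\eta)\,\abs{A_\eta}^{1/q_p}.
\end{align*}
Letting $p\to\infty$ and using $\abs{A_\eta}^{1/q_p}\to 1$ gives $\liminf_{p\to\infty}\left(\int_\Omega\abs{u_p}^{q_p}\dx\right)^{1/q_p} \ge M - 2\eta$. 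Since $\eta\in(0,M)$ was arbitrary, the liminf is at least $M$, and combined with the upper bound the limit exists and equals $M$.

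This argument is essentially the standard proof that $\L^q$ norms converge to the $\L^\infty$ norm on a finite measure space, adapted to a varying sequence $u_p$ via the uniform convergence. There is no real obstacle; the only point requiring a small amount of care is making the lower bound uniform in $p$, which is handled by choosing $p$ large enough that the perturbation $\eps_p$ is dominated by the slack $\eta$ in the level-set estimate, and by noting that the measure $\abs{A_\eta}$ of the level set is a fixed positive constant (depending only on $u_\infty$ and $\eta$), so its $q_p$-th root tends to $1$ regardless of how $q_p\to\infty$. The edge cases $M = 0$ and $A_\eta$ having to be replaced by a subset of finite positive measure (automatic here since $\Omega$ itself is finite) are trivial to dispatch.
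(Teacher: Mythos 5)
Your proof is correct. It differs from the paper's in a minor but genuine way: the paper splits $u_p=(u_p-u_\infty)+u_\infty$ and applies Minkowski's inequality (and its reverse) to reduce everything to the classical fact that $\left(\int_\Omega\abs{u_\infty}^{q}\dx\right)^{1/q}\to\esssup_\Omega\abs{u_\infty}$ as $q\to\infty$ for a fixed $u_\infty\in\L^\infty(\Omega)$ on a finite measure space, which it then invokes without proof. You instead prove the whole statement from scratch: the pointwise bound $\abs{u_p}\leq M+\eps_p$ gives the upper estimate directly, and the level-set argument on $A_\eta=\{\abs{u_\infty}>M-\eta\}$ (whose positive measure is exactly what the definition of the essential supremum provides) gives the lower estimate, with the uniform convergence used only to transfer the bound from $u_\infty$ to $u_p$ at the cost of $\eps_p<\eta$. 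In effect you have inlined the proof of the classical $\L^q\to\L^\infty$ limit that the paper treats as known, so your argument is more self-contained while the paper's is shorter. Both correctly handle the only delicate point, namely that the error introduced by replacing $u_p$ with $u_\infty$ must be controlled uniformly as the exponent $q_p=p-k$ tends to infinity; in your version this is the requirement $\eps_p<\eta$ together with $\abs{A_\eta}^{1/q_p}\to 1$, and in the paper's it is the factor $\eps\abs{\Omega}^{1/(p-k)}$ from Minkowski. Your parenthetical remarks about the edge case $M=0$ and about reading $k$ as fixed in the limit $p\to\infty$ are both appropriate and resolve the only ambiguities in the statement.
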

\begin{proof}
Let $\varepsilon>0$ be given. 
Then for $p$ sufficiently large it holds $\esssup\abs{u_p-u_\infty}\leq \varepsilon$.
Consequently, using Minkowski's inequality
\begin{align*}
    \left(\int_\Omega\abs{u_p}^{p-k}\d x\right)^\frac{1}{p-k} 
    &\leq 
    \left(\int_\Omega\abs{u_p-u_\infty}^{p-k}\d x\right)^\frac{1}{p-k} + \left(\int_\Omega\abs{u_\infty}^{p-k}\d x\right)^\frac{1}{p-k} \\
    &\leq  
    \eps \abs{\Omega}^\frac{1}{p-k} + \left(\int_\Omega\abs{u_\infty}^{p-k}\d x\right)^\frac{1}{p-k} 
\end{align*}
and hence
\begin{align*}
    \limsup_{p\to\infty}\left(\int_\Omega\abs{u_p}^{p-k}\d x\right)^\frac{1}{p-k} 
    \leq
    \varepsilon + \esssup_\Omega\abs{u_\infty}.
\end{align*}
Using the reverse triangle inequality one analogously obtains
\begin{align*}
    \liminf_{p\to\infty}\left(\int_\Omega\abs{u_p}^{p-k}\d x\right)^\frac{1}{p-k} 
    \geq
    -\varepsilon + \esssup_\Omega\abs{u_\infty}.
\end{align*}    
Combining these two inequalities and using that $\varepsilon>0$ was arbitrary concludes the proof.
\end{proof}

Now we can prove that the sequence of solutions of \labelcref{eq:p-poisson} has a convergent subsequence.

\begin{proposition}\label{prop:cvgc_p_to_infty}
Let $u_{p}\in\W^{1,p}(\Omega)$ be a weak solution of \labelcref{eq:p-poisson} with data $\data_p\in\M(\closure{\Omega})$ and assume that the data satisfies 
\begin{align*}
    \limsup_{p\to\infty}\abs{\data_p}(\closure{\Omega})<\infty.
\end{align*}
Then there exists a function $u_\infty\in\W^{1,\infty}(\Omega)$ such that as $p\to\infty$ (up to a subsequence) the functions $u_p$ converge to $u_\infty$ uniformly and weakly in $\W^{1,m}(\Omega)$ for any $m>1$.
Furthermore, it holds
\begin{align}\label{ineq:norm_bounds}
    \esssup_\Omega\abs{u_\infty} \leq  \frac{\diam(\Omega)}{2},\qquad\esssup_\Omega\abs{\grad u_\infty} \leq 1.
\end{align}
\end{proposition}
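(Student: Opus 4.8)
The plan is to combine the energy bound of \cref{prop:limsup_ineq} with the Morrey/Sobolev embedding to get uniform Hölder bounds, extract a uniformly convergent subsequence via Arzelà--Ascoli, and then upgrade the limit to $\W^{1,\infty}$ together with the two pointwise bounds in \labelcref{ineq:norm_bounds}. First I would set $M\defeq\limsup_{p\to\infty}\abs{\data_p}(\closure\Omega)<\infty$ and use \cref{prop:limsup_ineq} to conclude that $\left(\int_\Omega\abs{\grad u_p}^p\dx\right)^{1-1/p}$ is bounded for $p$ large; since the exponent $1-1/p\to 1$, this gives a uniform bound $\int_\Omega\abs{\grad u_p}^p\dx\leq C$ for all large $p$ (say $p\geq p_0$), with $C$ depending only on $M$ and $\diam(\Omega)$. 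For any fixed $m>1$, Hölder's inequality then yields $\norm{\grad u_p}_{\L^m(\Omega)}\leq\abs{\Omega}^{1/m-1/p}\norm{\grad u_p}_{\L^p(\Omega)}$, which is uniformly bounded for $p$ large; together with the zero-mean-type normalization $\int_\Omega\abs{u_p}^{p-2}u_p\dx=0$ and the Morrey inequality $\esssup_\Omega\abs{u_p}\leq\sigma_p^{-1/p}\norm{\grad u_p}_{\L^p(\Omega)}$ (with $\sigma_p^{-1/p}\to\diam(\Omega)/2$), the sequence $(u_p)$ is also uniformly bounded in $\L^\infty(\Omega)$, hence bounded in $\W^{1,m}(\Omega)$ for every $m>1$.

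Next I would fix some $m>d$ and use the compact embedding $\W^{1,m}(\Omega)\hookrightarrow\hookrightarrow\C(\closure\Omega)$ (valid on Lipschitz domains) to extract a subsequence, not relabeled, converging uniformly to some $u_\infty\in\C(\closure\Omega)$. By a diagonal argument over an increasing sequence $m_j\to\infty$ one may further assume $u_p\wto u_\infty$ weakly in $\W^{1,m}(\Omega)$ for every $m>1$; weak lower semicontinuity of the $\L^m$-norm of the gradient then gives $\norm{\grad u_\infty}_{\L^m(\Omega)}\leq\liminf_p\norm{\grad u_p}_{\L^m(\Omega)}\leq\abs{\Omega}^{1/m}\liminf_p\norm{\grad u_p}_{\L^p(\Omega)}^{?}$. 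To make this precise I would, for fixed $m$ and any $p\geq m$, bound $\norm{\grad u_p}_{\L^m(\Omega)}\leq\abs{\Omega}^{1/m-1/p}\left(\int_\Omega\abs{\grad u_p}^p\dx\right)^{1/p}$ and note that $\left(\int_\Omega\abs{\grad u_p}^p\dx\right)^{1/p}=\left[\left(\int_\Omega\abs{\grad u_p}^p\dx\right)^{1-1/p}\right]^{1/(p-1)}\to 1$ because the bracket is bounded. Hence $\norm{\grad u_\infty}_{\L^m(\Omega)}\leq\abs{\Omega}^{1/m}$ for every $m>1$; letting $m\to\infty$ gives $u_\infty\in\W^{1,\infty}(\Omega)$ with $\esssup_\Omega\abs{\grad u_\infty}\leq 1$. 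Passing to the limit in the Morrey inequality (or applying \labelcref{ineq:embedding} to $u_\infty$ after checking the normalization persists) yields $\esssup_\Omega\abs{u_\infty}\leq\diam(\Omega)/2$.

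The one genuinely delicate point is the \emph{normalization}: the constraint on $u_p$ is the nonlinear $\int_\Omega\abs{u_p}^{p-2}u_p\dx=0$, and I must argue it passes to a usable condition on $u_\infty$ — morally $\esssup_\Omega u_\infty+\essinf_\Omega u_\infty=0$. Here I would invoke \cref{lem:convergence_norms}: writing $u_p^+$ and $u_p^-$ for positive and negative parts, the constraint reads $\int_\Omega(u_p^+)^{p-1}\dx=\int_\Omega(u_p^-)^{p-1}\dx$, and since $u_p\to u_\infty$ uniformly, \cref{lem:convergence_norms} (applied with $k=1$ to $u_p^\pm$, using that $u_p^\pm\to u_\infty^\pm$ uniformly) gives $\esssup_\Omega u_\infty^+=\esssup_\Omega u_\infty^-$, i.e. $\max_{\closure\Omega}u_\infty=-\min_{\closure\Omega}u_\infty$; this both feeds into \labelcref{ineq:embedding} and records the normalization of the limit. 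The only other place requiring care is ensuring the constant $C$ in the energy bound is uniform in $p$ (not merely a limsup), which is handled by using \cref{prop:limsup_ineq} together with $\limsup_p\abs{\data_p}(\closure\Omega)<\infty$ to get a bound for all $p\geq p_0$ and absorbing finitely many small $p$ into the constant. With these in hand the proposition follows.
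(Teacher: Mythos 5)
Your argument is correct and follows essentially the same route as the paper: the energy bound from \cref{prop:limsup_ineq}, Hölder's inequality to get uniform $\W^{1,m}$ bounds, the compact embedding for $m>d$, and weak lower semicontinuity followed by sending $m\to\infty$. The only (harmless) deviations are that you bound $\esssup_\Omega\abs{u_p}$ directly through the Morrey constant $\sigma_p$ whereas the paper bounds the $\L^m$-norms via the Poincaré eigenvalue $\lambda_p$, and that you additionally record the normalization $\max_{\closure\Omega}u_\infty+\min_{\closure\Omega}u_\infty=0$ via \cref{lem:convergence_norms}, which the paper defers to the proof of \cref{thm:viscosity_PDE}; note only that your claim $\left(\int_\Omega\abs{\grad u_p}^p\dx\right)^{1/p}\to 1$ should read $\limsup_{p\to\infty}\left(\int_\Omega\abs{\grad u_p}^p\dx\right)^{1/p}\leq 1$ (the quantity could in principle tend to a limit below $1$), which is all that your estimates actually use.
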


\begin{proof}
We follow the strategy from \cite{bhattacharya1989limits}.
For $p>m$ Hölder's inequality yields
\begin{align*}
    \int_\Omega\abs{\grad u_p}^m\d x 
    \leq
    \left(\int_\Omega\abs{\grad u_p}^{p}\dx\right)^\frac{m}{p}\abs{\Omega}^{1-\frac{m}{p}}.
\end{align*}
Consequently, using \cref{prop:limsup_ineq} it follows
\begin{align}\label{ineq:cv_pf_1}
    \limsup_{p\to\infty}\int_\Omega\abs{\grad u_p}^m\d x \leq \abs{\Omega} < \infty.
\end{align}
Introducing the first non-zero eigenvalue of the $p$-Laplacian operator \cite{rossi2016first,esposito2015neumann}
\begin{align}\label{eq:eigenvalue}
    \lambda_p := \inf\left\lbrace\frac{\int_\Omega\abs{\grad u}^p\dx}{\int_\Omega\abs{u}^p\dx}\st u\in\W^{1,p}(\Omega),\,\int_\Omega\abs{u}^{p-2}u\dx = 0\right\rbrace
\end{align}
it holds
\begin{align}\label{ineq:p-norm-bounds}
    \int_\Omega \abs{u_p}^{p}\d x \leq \frac{1}{\lambda_p} \int_\Omega\abs{\grad u_p}^{p}\d x 
\end{align}
and therefore we can estimate
\begin{align*}
    \int_\Omega\abs{u_p}^m\d x
    \leq
    \abs{\Omega}^{1-\frac{m}{p}}
    \left(\int_\Omega\abs{u_p}^p\dx\right)^\frac{m}{p} 
    \leq 
    \abs{\Omega}^{1-\frac{m}{p}} 
    \frac{1}{\sqrt[p]{\lambda_p}^m}
    \left(\int_\Omega\abs{\grad u_p}^p\dx\right)^\frac{m}{p}.
\end{align*}
Using \cref{prop:limsup_ineq} together with the fact that according to \cite{esposito2015neumann} it holds $\sqrt[p]{\lambda_p}\to\lambda_\infty:=\frac{2}{\diam(\Omega)}\in(0,\infty)$ as $p\to\infty$ we obtain
\begin{align}\label{ineq:cv_pf_2}
    \limsup_{p\to\infty}\int_\Omega\abs{u_p}^m \d x
    \leq\frac{\abs{\Omega}}{\lambda_\infty^m}<\infty.
\end{align}
Thanks to \labelcref{ineq:cv_pf_1,ineq:cv_pf_2} the sequence $u_p$ has uniformly bounded $\W^{1,m}$-norms and hence (up to a subsequence) converges weakly to a function $u_\infty$ in $\W^{1,m}(\Omega)$.
Furthermore, for $m>d$ one has the compact embedding \cite{ziemer2012weakly} of $\W^{1,m}(\Omega)$ into $\C^{0,1-\frac{d}{m}}(\closure{\Omega})$ which (after another round of subsequence refinement) proves the uniform convergence.

It remains to argue that $u_\infty\in\W^{1,\infty}(\Omega)$ and to prove \labelcref{ineq:norm_bounds}.
Using the weak lower semi-continuity of the $\L^m$-norm  we obtain from \labelcref{ineq:cv_pf_1} that
\begin{align*}
    \int_\Omega\abs{\grad u_\infty}^m \d x\leq
    \liminf_{p\to\infty}\int_\Omega\abs{\grad u_p}^m\d x \leq \abs{\Omega}.
\end{align*}
Taking the $m$-th root and sending $m\to\infty$ an application of \cref{lem:convergence_norms} shows
\begin{align*}
    \esssup_\Omega\abs{\grad u_\infty} 
    = 
    \lim_{m\to\infty}\left(\int_\Omega\abs{\grad u_\infty}^m\d x\right)^\frac{1}{m}
    \leq
    1.
\end{align*}
Using again lower semi-continuity and \labelcref{ineq:cv_pf_2} yields
\begin{align*}
    \int_\Omega\abs{u_\infty}^m\d x\leq\liminf_{p\to\infty}\int_\Omega\abs{u_p}^m\d x \leq \frac{\abs{\Omega}}{\lambda_\infty^m}.
\end{align*}
Taking the $m$-th root and applying \cref{lem:convergence_norms} with $p=m$ and $k=0$ yields
\begin{align*}
    \esssup_\Omega\abs{u_\infty} 
    &= 
    \lim_{m\to\infty}\left(\int_\Omega\abs{u_\infty}^m\dx\right)^\frac{1}{m}
    \leq 
    \limsup_{m\to\infty}
    \frac{\abs{\Omega}^\frac{1}{m}}{\lambda_\infty} = \frac{1}{\lambda_\infty} = \frac{\diam(\Omega)}{2}.
\end{align*}
Hence, we have established all inequalities in \labelcref{ineq:norm_bounds}.
\end{proof}

\begin{example}\label{ex:p-laplacians}
If $\d\data_p(x)=\sign(x)\d x$ for all $p>1$, the explicit solution of the $p$-Laplacian equation \labelcref{eq:p-poisson} is given by $u_p(x)=\frac{p-1}{p}\sign(x)\left[1-(1-\abs{x})^\frac{p}{p-1}\right]$.
As $p\to\infty$ the functions $u_p$ converge uniformly to $u_\infty(x)=x$, see \cref{fig:1d_sols_p}.
Note that the Neumann boundary conditions get lost in the limit $p\to\infty$, see also \cref{sec:PDE}.

\begin{figure}[thb]
\centering
\begin{tikzpicture}
    \begin{axis}[
axis lines = center,
axis equal image,
xtick = {-1,-0.5,0,0.5,1},
xticklabels = {$-1$,$-0.5$,$0$,$0.5$,$1$},
ytick = {-1,-0.5,0,0.5,1},
yticklabels = {$-1$,$-0.5$,$0$,$0.5$,$1$}
]

\def\function(#1,#2){
((#2)-1)/(#2)*sign(#1)*(1-(1-abs(#1))^((#2)/((#2)-1)))
}

\foreach [evaluate=\p as \n using ln(\p)/ln(100) * 100] \p in {1.1,1.5,2,5,10,100}
{
\edef\temp{%
\noexpand
\addplot [
    domain=-1:1,
    samples=151, 
    color=blue!\n!red,
    smooth
]
{\function(x,\p)};
}\temp
}
\end{axis}
\end{tikzpicture}
\caption{Solutions $u_p$ of \labelcref{eq:p-poisson} for $p\in\{1.1,1.5,2,5,10,100\}$ (from red to blue).}
\label{fig:1d_sols_p}
\end{figure}

\end{example}

\subsection{Optimal transport characterization}
\label{sec:OT}

The main theorem in this section characterizes the limit $u_\infty$ as optimal transport potential.
We assume that the data measures $\data_p$ converge in the \rev{weak-star sense} of measures.
\rev{This} makes sure that one can pass to the limit in duality products where both factors converge, as the following lemma shows.
\begin{lemma}\label{lem:duality}
If $\data_n\wsto\data$ in $\M(\closure{\Omega})$ and $u_n\to u$ uniformly in $\C(\closure{\Omega})$ then it holds
\begin{align}
    \lim_{n\to\infty}\int_\Omega u_n\d\data_n = \int_\Omega u\d\data.
\end{align}
\end{lemma}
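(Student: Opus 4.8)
The plan is to split the difference
\[
\int_\Omega u_n\d\data_n - \int_\Omega u\d\data = \int_\Omega (u_n - u)\d\data_n + \left(\int_\Omega u\d\data_n - \int_\Omega u\d\data\right)
\]
and control each term separately. The second term goes to zero by the very definition of weak-star convergence, since $u\in\C(\closure\Omega)$ is a fixed test function. For the first term I would estimate
\[
\left\vert\int_\Omega (u_n - u)\d\data_n\right\vert \leq \norm{u_n - u}_{\C(\closure\Omega)}\,\abs{\data_n}(\closure\Omega),
\]
so it suffices to know that $\abs{\data_n}(\closure\Omega)$ is bounded uniformly in $n$, because then the right-hand side is a vanishing quantity (uniform convergence) times a bounded one.

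The one genuine point to address is therefore the uniform boundedness of the total variations $\abs{\data_n}(\closure\Omega)$. This is a standard consequence of weak-star convergence: by the uniform boundedness principle, a weak-star convergent sequence in the dual space $\M(\closure\Omega) = \C(\closure\Omega)^*$ is norm-bounded, and the norm on $\M(\closure\Omega)$ is precisely the total variation. Alternatively, one can invoke that weak-star convergent sequences are weak-star (pre)compact and hence bounded. I expect this to be the only step requiring a word of justification; everything else is the triangle inequality and the definitions.

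Putting it together: fix $\eps > 0$, let $M := \sup_n \abs{\data_n}(\closure\Omega) < \infty$, choose $N_1$ so that $\norm{u_n - u}_{\C(\closure\Omega)} \leq \eps/(M+1)$ for $n \geq N_1$, and choose $N_2$ so that $\abs{\int_\Omega u\d\data_n - \int_\Omega u\d\data} \leq \eps$ for $n \geq N_2$; then for $n \geq \max\{N_1, N_2\}$ the total error is at most $2\eps$. Since $\eps$ was arbitrary, the claim follows. The only mild obstacle worth flagging is making sure the total-variation bound is stated with the correct constant handling (the $+1$ in the denominator avoids dividing by zero when $M = 0$), but this is cosmetic.
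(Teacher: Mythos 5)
Your proposal is correct and follows exactly the paper's own argument: the same decomposition of the difference into $\int(u_n-u)\d\data_n$ plus $\int u\d(\data_n-\data)$, the same estimate of the first term by $\abs{\data_n}(\closure\Omega)\sup_{\closure\Omega}\abs{u_n-u}$, and the same appeal to the Banach--Steinhaus theorem to obtain the uniform bound on the total variations. Nothing to add.
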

\begin{proof}
With the abbreviation $\langle\data,u\rangle:=\int_\Omega u\d\data$ we can compute
\begin{align*}
    \abs{\langle\data_n,u_n\rangle-\langle\data,u\rangle} 
    &= 
    \abs{\langle\data_n,u_n-u\rangle+\langle\data_n,u\rangle-\langle\data,u\rangle} 
    \\
    &\leq 
    \abs{\data_n}(\closure{\Omega})\sup_{\closure{\Omega}}\abs{u_n-u} + \abs{\langle\data_n-\data,u\rangle}.
\end{align*}
\rev{The Banach--Steinhaus theorem (or uniform boundedness principle) \cite[Section 2.2]{brezis2011functional} makes sure that $\sup_n\abs{\data_n}(\closure{\Omega})<\infty$.} 
\rev{Together with the} uniform convergence \rev{of $u_n$} \rev{and the weak-star convergence of $\data_n$ this implies} the right hand side goes to zero when taking the $\limsup$ as $n\to\infty$.
\end{proof}
To set the scene for the optimal transport characterization we remind the reader of the usual Wasserstein-$1$ distance $W_1(\data^+,\data^-)$ of the two measures $\data^\pm$, defined as
\begin{align}\label{eq:wasserstein_distance}
    W_1(\data^+,\data^-) := \sup\left\lbrace\int_\Omega u\d\data^+ - \int_\Omega u\d\data^- \st 
     u\in\C(\closure\Omega),\;\Lip( u)\leq 1
    \right\rbrace,
\end{align}
where the Lipschitz constant $\Lip( u)$ in \labelcref{eq:wasserstein_distance} is
\begin{align}
    \Lip( u) := \sup_{\substack{x,y\in\Omega\\x\neq y}}\frac{\abs{ u(x)- u(y)}}{\abs{x-y}}.
\end{align}
Functions $ u\in\C(\closure\Omega)$ which attain the supremum in \labelcref{eq:wasserstein_distance} are typically referred to as Kantorovich potentials.

The Lipschitz constant, and hence also the Wasserstein-1 distance, is defined with respect to the Euclidean metric on $\R^d$.
This is, however, not the most natural metric to consider on the (possibly non-convex) domain $\Omega$.
Indeed it can happen that two points in $\Omega$ have a small Euclidean distance although transporting two measure concentrated on these points onto each other within $\Omega$ requires a long transportation path.
To overcome this one can use the geodesic distance on $\Omega$, defined in \labelcref{eq:geo_dist}. 
Correspondingly, one can also introduce the geodesic Lipschitz constant \labelcref{eq:geodesic_Lip_const} and geodesic Wasserstein-1 distance \labelcref{eq:geodesic_wasserstein_distance}.

As \cref{thm:wasserstein} states, this geodesic transport distance \labelcref{eq:geodesic_wasserstein_distance} arises naturally in the limiting problem of the $p$-Poisson equation \labelcref{eq:p-poisson}.
We now give the proof of this statement.

\begin{proof}[Proof of \cref{thm:wasserstein}]
First we note that the \rev{weak-star} convergence of $\data_p$ \rev{together with the Banach--Steinhaus theorem} in particular implies that
\begin{align*}
\limsup_{p\to\infty}\abs{\data_p}(\closure{\Omega})<\infty
\end{align*}
such that \cref{prop:cvgc_p_to_infty} assures the existence of a subsequential uniform limit $u_\infty$.

Let $ u\in\W^{1,\infty}(\Omega)$ with $\esssup_\Omega\abs{\grad u}\leq 1$ be arbitrary.
Without loss of generality we can assume that $\int_\Omega\abs{ u}^{p-2} u\d x=0$.
Since $u_p$ in particular solves \labelcref{eq:varprob_p} it holds
\begin{align*}
    \frac{1}{p}\int_\Omega\abs{\grad u_p}^p \d x - \int_\Omega u_p \d\data_p \leq \frac{1}{p}\int_\Omega\abs{\grad  u}^p \d x - \int_\Omega  u \d\data_p.
\end{align*}
We can rearrange this inequality to
\begin{align*}
    &\frac{1}{p}\int_\Omega\abs{\grad u_p}^p \d x -
    \frac{1}{p}\int_\Omega\abs{\grad  u}^p \d x + 
    \int_\Omega  u \d\data_p^+ - \int_\Omega  u \d\data_p^- \\
    \leq &\int_\Omega u_p \d\data_p^+ - \int_\Omega u_p \d\data_p^-,
\end{align*}
where $\data_p=\data_p^+ - \data_p^-$, with non-negative measures $\data_p^\pm\in\M(\closure{\Omega})$, is the Jordan decomposition of $\data_p$.
Obviously it holds $\data_p^\pm\wsto\data^\pm$ as $p\to\infty$ since the measures $\data_p^\pm$ are mutually singular.
Now we use \cref{lem:duality} together with the fact that the first term is non-negative and $\abs{\grad u}\leq 1$ a.e. in $\Omega$ to obtain
\begin{align*}
    \int_\Omega  u \d\data^+ - \int_\Omega  u \d \data^-
    \leq \int_\Omega u_\infty \d \data^+ - \int_\Omega  u_\infty \d\data^-.
\end{align*}
Since by \labelcref{ineq:norm_bounds,eq:geodesic_Lip} the function $u_\infty$ is feasible for the optimization problem in \labelcref{eq:geodesic_wasserstein_distance}, taking the supremum over $ u$ shows the assertion.
\end{proof}

Since according to \cref{prop:cvgc_p_to_infty} the limit $u_\infty$ also satisfies $\esssup_\Omega\abs{u_\infty}\leq\frac{\diam(\Omega)}{2}$, one could also have the idea to include a boundedness condition in the optimization problem in \labelcref{eq:geodesic_wasserstein_distance}.
This is motivated by the so-called Kantorovich--Rubinstein (KR) norm of the measure $\data=\data^+-\data^-$ on the length space $(\Omega,d_\Omega)$, which is defined as
\begin{align}\label{eq:kantorovich-rubinstein}
    \norm{\data}_{\KR(\closure{\Omega})} \defeq
    \sup\left\lbrace\int_\Omega  u \d\data \st  u\in\C(\closure\Omega),\;\esssup_\Omega\abs{ u}\leq 1 ,\;\Lip_\Omega(u)\leq 1 \right\rbrace.
\end{align}
The reason why the KR norm does not appear naturally in our context is that for measures with zero mass it is equivalent (and for suitably scaled domains even equal) to the so-called dual Lipschitz norm.
This norm coincides with the geodesic Wasserstein distance of the positive and negative part of the measure and is defined as
\begin{align}\label{eq:dual_lipschitz}
    \norm{\data}_{\Lip_\Omega^\ast(\closure{\Omega})} \defeq
    \sup\left\lbrace\int_\Omega  u \d\data \st  u\in\C(\closure\Omega),\;\Lip_\Omega(u)\leq 1 \right\rbrace
    =
    W_{1,\Omega}(\data^+,\data^-).
\end{align}
For completeness, the equivalence is stated in the following proposition.
\begin{proposition}
Let $\data\in\M(\closure{\Omega})$ satisfy $\data(\closure{\Omega})=0$.
Then it holds
\begin{align}
    \norm{\data}_{\KR(\closure{\Omega})} 
    \leq 
    \norm{\data}_{\Lip_\Omega^\ast(\closure{\Omega})}
    \leq
    \left(1\vee\frac{\diam(\Omega)}2\right)
    \norm{\data}_{\KR(\closure{\Omega})}. 
\end{align}
\end{proposition}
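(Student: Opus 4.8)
The plan is the following. The first inequality is immediate from the definitions \labelcref{eq:kantorovich-rubinstein} and \labelcref{eq:dual_lipschitz}: any $u\in\C(\closure\Omega)$ which is admissible for \labelcref{eq:kantorovich-rubinstein}, i.e.\ which satisfies $\esssup_\Omega\abs{u}\leq 1$ and $\Lip_\Omega(u)\leq 1$, is in particular admissible for \labelcref{eq:dual_lipschitz}, where only $\Lip_\Omega(u)\leq 1$ is required. Hence the supremum defining $\norm{\data}_{\Lip_\Omega^\ast(\closure\Omega)}$ runs over a larger set than the one defining $\norm{\data}_{\KR(\closure\Omega)}$, which immediately gives $\norm{\data}_{\KR(\closure\Omega)}\leq\norm{\data}_{\Lip_\Omega^\ast(\closure\Omega)}$.

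For the reverse estimate I would use that, since $\data(\closure\Omega)=0$, the value $\int_\Omega u\d\data$ is unchanged when a constant is added to $u$. Let $u\in\C(\closure\Omega)$ with $\Lip_\Omega(u)\leq 1$ be arbitrary, set $c\defeq\frac12\bigl(\max_{\closure\Omega}u+\min_{\closure\Omega}u\bigr)$ (the extrema exist because $\closure\Omega$ is compact and $u$ is continuous) and $\tilde u\defeq u-c$. Then $\int_\Omega\tilde u\d\data=\int_\Omega u\d\data$, $\Lip_\Omega(\tilde u)=\Lip_\Omega(u)\leq 1$, and $\max_{\closure\Omega}\tilde u+\min_{\closure\Omega}\tilde u=0$, so that $\esssup_\Omega\abs{\tilde u}=\frac12\bigl(\max_{\closure\Omega}u-\min_{\closure\Omega}u\bigr)$. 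Estimating the oscillation of $u$ directly via the geodesic Lipschitz constant and the geodesic diameter---or, equivalently, invoking \labelcref{ineq:embedding}, which applies because $\Lip_\Omega(u)\leq 1$ forces $u\in\W^{1,\infty}(\Omega)$ with $\esssup_\Omega\abs{\grad u}\leq 1$ by \labelcref{eq:geodesic_Lip}---this yields $\esssup_\Omega\abs{\tilde u}\leq\frac{\diam(\Omega)}2$.

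Finally I would rescale: put $\alpha\defeq\bigl(1\vee\frac{\diam(\Omega)}2\bigr)^{-1}=\min\bigl\{1,\frac2{\diam(\Omega)}\bigr\}\in(0,1]$ and $v\defeq\alpha\tilde u$. Then $\Lip_\Omega(v)=\alpha\,\Lip_\Omega(\tilde u)\leq\alpha\leq 1$, and $\esssup_\Omega\abs{v}=\alpha\esssup_\Omega\abs{\tilde u}\leq\alpha\,\frac{\diam(\Omega)}2\leq 1$, where the last step uses $\alpha\,\frac{\diam(\Omega)}2=1$ when $\diam(\Omega)>2$ and $\frac{\diam(\Omega)}2\leq 1$ otherwise. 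Thus $v$ is admissible for \labelcref{eq:kantorovich-rubinstein}, so $\alpha\int_\Omega u\d\data=\int_\Omega v\d\data\leq\norm{\data}_{\KR(\closure\Omega)}$, i.e.\ $\int_\Omega u\d\data\leq\bigl(1\vee\frac{\diam(\Omega)}2\bigr)\norm{\data}_{\KR(\closure\Omega)}$. Taking the supremum over all admissible $u$ yields $\norm{\data}_{\Lip_\Omega^\ast(\closure\Omega)}\leq\bigl(1\vee\frac{\diam(\Omega)}2\bigr)\norm{\data}_{\KR(\closure\Omega)}$.

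The argument is largely routine; the only step that needs a little care is the oscillation bound $\max_{\closure\Omega}u-\min_{\closure\Omega}u\leq\diam(\Omega)$, since the supremum in the definition of $\diam(\Omega)$ is taken over $\Omega$ and one must pass to boundary points via a continuity/density argument---this is vacuous when $\Omega$ is disconnected, in which case $\diam(\Omega)=\infty$ and the second inequality is trivial. I do not expect any genuine obstacle beyond this bookkeeping.
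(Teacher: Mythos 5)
Your proposal is correct and follows essentially the same route as the paper: the first inequality by comparison of constraint sets, and the second by centering $u$ so that $\max u+\min u=0$ (using $\data(\closure\Omega)=0$), bounding $\esssup\abs{u}$ by $\frac{\diam(\Omega)}{2}$ via \labelcref{ineq:embedding}, and rescaling by $1\vee\frac{\diam(\Omega)}{2}$. The only cosmetic difference is that you argue for each admissible $u$ individually while the paper rewrites the supremum as a chain of equalities; the substance is identical.
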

\begin{proof}
The proof works just as in \cite[Proposition 4.3]{bungert2022eigenvalue}, see also \cite[Lemma 2.1]{lellmann2014imaging}.
By omitting the constraint $\esssup_\Omega\abs{ u}\leq 1$ we obtain the first inequality $\norm{\data}_{\KR(\closure\Omega)}\leq\norm{\data}_{\Lip_\Omega^\ast(\closure\Omega)}$.

For the other inequality we argue as follows:
Since $\data$ has zero mass, we can without loss of generality assume that the supremum in \labelcref{eq:dual_lipschitz} is taken over functions that satisfy $\esssup_\Omega u +\essinf_\Omega u = 0$ by replacing $ u$ with $u-c$ for a suitable constant.
Then, using \labelcref{ineq:embedding} we get for all $ u\in\C(\closure\Omega)$ with $\Lip_\Omega(u)\leq 1$ that
\begin{align*}
    \esssup_\Omega\abs{ u} \leq \frac{\diam(\Omega)}{2}\underbrace{\esssup_\Omega\abs{\grad u}}_{=\Lip_\Omega(u)} \leq \frac{\diam(\Omega)}{2}.
\end{align*}
Letting $t:=1\vee\frac{\diam(\Omega)}{2}\geq 1$ it holds
\begin{align*}
    \norm{\data}_{\Lip_\Omega^\ast(\closure\Omega)}
    &=
    t
    \sup\left\lbrace\int_\Omega \frac{1}{t} u \d\data \st  u\in\C(\closure\Omega),\;\Lip_\Omega(u)\leq 1 \right\rbrace
    \\
    &=
    t
    \sup\left\lbrace\int_\Omega \frac{1}{t} u \d\data \st  u\in\C(\closure\Omega),\;\esssup_\Omega\abs{ u}\leq t,\;\Lip_\Omega(u)\leq 1 \right\rbrace
    \\
    &=
    t
    \sup\left\lbrace\int_\Omega  u \d\data \st  u\in\C(\closure\Omega),\;\esssup_\Omega\abs{ u}\leq 1,\;\Lip_\Omega(u)\leq \frac{1}{t} \right\rbrace
    \\
    &\leq 
    t
    \sup\left\lbrace\int_\Omega  u \d\data \st  u\in\C(\closure\Omega),\;\esssup_\Omega\abs{ u}\leq 1,\;\Lip_\Omega(u)\leq 1\right\rbrace
    \\
    &=
    t\norm{\data}_{\KR(\closure\Omega)}.
\end{align*}
\end{proof}

\subsection{PDE characterization}
\label{sec:PDE}

Now we also give a PDE characterization of the limit $u_\infty$, which we have shown to be a Kantorovich potential in the previous section.
Note that Kantorovich potentials are typically not unique which is why it is interesting to verify that the limiting procedure $p\to\infty$ selects a more regular potential.
Since $u_\infty$ turns out to solve an infinity Laplacian type PDE in the viscosity sense, we also have to work with viscosity solutions for finite~$p$.
However, for that we have to assume that the data $\data_p$ are continuous \rev{and converge uniformly}.

Let us first define what it means to be a viscosity solution to the $p$-Poisson equation~\labelcref{eq:p-poisson}.
In particular, one has to interpret the Neumann boundary conditions in the viscosity sense, see also~\cite{esposito2015neumann,amato2021solutions}.
\rev{
As explained in \cite{crandall1992user} the proper way to understand boundary conditions for boundary value problems of the form
\begin{align*}
    \begin{cases}
    F(x,u(x),\grad u(x), D^2 u(x))=0\quad&\text{in }\Omega,\\
    B(x,u(x),\grad u(x), D^2 u(x))=0\quad&\text{on }\partial\Omega,
    \end{cases}
\end{align*}
is to demand that subsolutions satisfy 
\begin{align*}
    \min\left\lbrace F(x,u(x),\grad u(x), D^2 u(x)), B(x,u(x),\grad u(x), D^2 u(x))\right\rbrace\leq 0 \quad\text{on $\partial\Omega$}
\end{align*}
and supersolutions satisfy the converse inequality with a $\max$ in place of the $\min$.
}
\begin{definition}[Viscosity solutions of the $p$-Poisson equation]\label{def:visc_p}
Let $\data_p\in\C(\closure\Omega)$.
An upper semi-continuous function $u:\closure\Omega\to\R$ is called viscosity subsolution of \labelcref{eq:p-poisson} if 
\begin{itemize}
    \item for all $x_0\in\Omega$ and $\phi\in C^2(\Omega)$ such that $u-\phi$ has a local maximum at $x_0$ it holds
    \begin{align*}
        -\Delta_p \phi(x_0)-\data_p(x_0) \leq 0,
    \end{align*}
    \item for all $x_0\in\partial\Omega$ and $\phi\in C^2(\closure\Omega)$ such that $u-\phi$ has a local maximum at $x_0$ it holds
    \begin{align*}
        \min\left\lbrace \abs{\grad\phi(x_0)}^{p-2}\frac{\partial\phi}{\partial\nu}(x_0), -\Delta_p\phi(x_0) - \data_p(x_0) \right\rbrace \leq 0,
    \end{align*}
    \item it holds $\int_\Omega\abs{u}^{p-2}u\d x\leq 0$.
\end{itemize}
A lower semi-continuous function $u:\closure\Omega\to\R$ is called viscosity supersolution of \labelcref{eq:p-poisson} if 
\begin{itemize}
    \item for all $x_0\in\Omega$ and $\phi\in C^2(\Omega)$ such that $u-\phi$ has a local minimum at $x_0$ it holds
    \begin{align*}
        -\Delta_p \phi(x_0)-\data_p(x_0) \geq 0,
    \end{align*}
    \item for all $x_0\in\partial\Omega$ and $\phi\in C^2(\closure\Omega)$ such that $u-\phi$ has a local maximum at $x_0$ it holds
    \begin{align*}
        \max\left\lbrace \abs{\grad\phi(x_0)}^{p-2}\frac{\partial\phi}{\partial\nu}(x_0), -\Delta_p\phi(x_0) - \data_p(x_0) \right\rbrace \geq 0,
    \end{align*}
    \item it holds $\int_\Omega\abs{u}^{p-2}u\d x \geq 0$.
\end{itemize}
A function $u\in\C(\closure\Omega)$ is called viscosity solution of \labelcref{eq:p-poisson} if it is both a sub- and a supersolution.
\end{definition}
We need the following well-known statement which asserts that weak solutions to the $p$-Poisson equation are also viscosity solutions.
\begin{lemma}
Let $\data_p\in \C(\closure{\Omega})$.
\rev{A continuous} weak solution to \labelcref{eq:p-poisson} in the sense of \cref{def:weak_solutions} is also a viscosity solution in the sense of \cref{def:visc_p}.
\end{lemma}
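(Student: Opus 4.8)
The plan is to verify the three defining conditions of a viscosity sub/supersolution (Definition~\ref{def:visc_p}) directly from the weak formulation~\labelcref{eq:weak-formulation}, using the standard comparison-with-test-functions argument. The normalization condition $\int_\Omega |u|^{p-2}u\dx \le 0$ (and $\ge 0$) is immediate since weak solutions satisfy it with equality, so only the interior PDE and the Neumann boundary conditions require work.

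For the interior subsolution property, I would argue by contradiction. Suppose $u-\phi$ has a local maximum at some $x_0\in\Omega$ with $\phi\in\C^2$ but $-\Delta_p\phi(x_0) - \data_p(x_0) > 0$. By continuity of $\data_p$ and of $x\mapsto\Delta_p\phi(x)$ wherever $\grad\phi\neq 0$, the inequality $-\Delta_p\phi > \data_p$ persists on a small ball $B_r(x_0)\Subset\Omega$; one must first dispose of the degenerate case $\grad\phi(x_0)=0$ by the usual trick of replacing $\phi$ with $\phi(x) + \eps|x-x_0|^4$ or noting $-\Delta_p\phi(x_0)=0$ there so that the strict inequality already fails unless $\data_p(x_0)<0$, in which case one perturbs. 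Subtracting a constant we may assume $u\le\phi$ on $B_r(x_0)$ with equality at $x_0$; shrinking $r$ and subtracting a further small constant $\delta>0$, we get $\phi - \delta \ge u$ on $\partial B_r(x_0)$ while $(\phi-\delta)(x_0) < u(x_0)$. Using $\varphi := \max\{u, \phi - \delta\} - (\phi-\delta)$, which is a nonnegative $\W^{1,p}$ function supported in $B_r(x_0)$, as a test function in the weak formulation of $u$ and comparing with the pointwise inequality $-\Delta_p\phi \ge \data_p$ integrated against the same $\varphi$ (legitimate since $\phi$ is smooth with $\grad\phi\ne 0$ on the relevant region, so integration by parts applies), one derives $\int |\grad\phi|^{p-2}\grad\phi\cdot\grad\varphi \ge \int\varphi\,\d\data_p$; combined with the weak identity for $u$ this forces $\int(|\grad u|^{p-2}\grad u - |\grad\phi|^{p-2}\grad\phi)\cdot\grad(u-\phi)\le 0$ on the set $\{u>\phi-\delta\}$, and strict monotonicity of $\xi\mapsto|\xi|^{p-2}\xi$ then yields $\grad u=\grad\phi$ a.e.\ there, whence $u=\phi-\delta+\text{const}$, contradicting the strict inequality at $x_0$. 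The supersolution case is symmetric.

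For the boundary conditions, let $x_0\in\partial\Omega$ and suppose $u-\phi$ has a local max at $x_0$ with $\phi\in\C^2(\closure\Omega)$; I need $\min\{|\grad\phi(x_0)|^{p-2}\partial_\nu\phi(x_0),\,-\Delta_p\phi(x_0)-\data_p(x_0)\}\le 0$. Assume both quantities are $>0$. The strict interior-type inequality $-\Delta_p\phi>\data_p$ again propagates to a half-ball $B_r(x_0)\cap\Omega$ (handling $\grad\phi(x_0)=0$ as above), and $\partial_\nu\phi>0$ near $x_0$ on $\partial\Omega$, so $|\grad\phi|^{p-2}\grad\phi\cdot\nu>0$ there. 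Running the same comparison argument but now with a test function supported in $B_r(x_0)\cap\closure\Omega$ (not required to vanish on $\partial\Omega$), the boundary integral $\int_{\partial\Omega} |\grad\phi|^{p-2}\partial_\nu\phi\,\varphi\dH$ appears with a favorable sign when integrating $-\Delta_p\phi$ by parts against $\varphi\ge 0$; since the weak formulation for $u$ has \emph{no} boundary term (the natural Neumann condition is built in), this extra nonnegative boundary contribution only strengthens the inequality, and one again concludes $\grad u=\grad\phi$ a.e.\ on $\{u>\phi-\delta\}\cap\Omega$ and reaches the same contradiction. The supersolution boundary condition follows symmetrically, noting that the definition uses a local \emph{maximum} of $u-\phi$ there as well, which is consistent with the general subsolution/supersolution boundary convention recalled before Definition~\ref{def:visc_p}.

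The main obstacle is the degenerate behavior of $\Delta_p$ where $\grad\phi$ vanishes: the quasilinear operator is only degenerate-elliptic, so $\Delta_p\phi$ need not be continuous at critical points of $\phi$, and the pointwise-to-integral passage must be justified carefully. The standard resolution is the reduction (see \cite{juutinen2001equivalence}) showing one may restrict attention to test functions with $\grad\phi(x_0)\ne 0$, or alternatively to absorb the critical case via the observation that at a critical point $-\Delta_p\phi(x_0)=0$ so the required inequalities become $-\data_p(x_0)\le 0$ (resp.\ $\ge 0$), which one checks using second-order test functions of the form $\phi+\eps|x-x_0|^4$ whose gradients are nonvanishing away from $x_0$. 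Everything else is routine monotonicity and the divergence theorem.
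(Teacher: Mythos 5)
The paper does not actually write out a proof of this lemma; it delegates entirely to the literature (\cite{amato2021solutions}, \cite{medina2019viscosity}, \cite{esposito2015neumann}). What you have written is, in substance, exactly the comparison-principle argument used in those references: test the weak formulation with $\varphi=(u-\phi+\delta)^+$, integrate the smooth strict super-/subsolution $\phi$ by parts, and invoke strict monotonicity of $\xi\mapsto\abs{\xi}^{p-2}\xi$. Your handling of the interior case and of the Neumann boundary term (the sign of $\int_{\partial\Omega}\abs{\grad\phi}^{p-2}\partial_\nu\phi\,\varphi\dH$ reinforcing the inequality) is the right mechanism. Two remarks on the interior step: for $\varphi$ to be compactly supported in $B_r(x_0)$ you need $\sup_{\partial B_r}(u-\phi)<0$, which a mere local maximum does not give; the standard fix is to first replace $\phi$ by $\phi+\eps\abs{x-x_0}^2$ so the maximum is strict, noting that the strict inequality $-\Delta_p\phi>\data_p$ survives for small $\eps$. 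Also, for $p>2$ the degenerate case $\grad\phi(x_0)=0$ needs no separate perturbation: $\Delta_p\phi$ is then continuous with value $0$ at $x_0$, the contradiction hypothesis forces $\data_p(x_0)<0$, and the strict inequality $-\Delta_p\phi>\data_p$ persists on a small ball by continuity alone, so the same comparison applies.

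There is, however, one genuine problem in your treatment of the boundary supersolution clause. You adopt the "local maximum" appearing in the supersolution bullet of \cref{def:visc_p} and assert that the argument "follows symmetrically"; it does not. The symmetric comparison argument requires $\phi$ to touch $u$ from \emph{below} at $x_0\in\partial\Omega$ (a local minimum of $u-\phi$), so that one can test with $(\phi+\delta-u)^+$ and the boundary term $\int_{\partial\Omega}\abs{\grad\phi}^{p-2}\partial_\nu\phi\,\varphi\dH\le 0$ points in the helpful direction. If one insists on a local maximum there, the statement is actually false: for a classical solution with $\partial_\nu u=0$, the test function $\phi=u+c\,(x_0-x)\cdot\nu+K\abs{x-x_0}^2$ with $K$ large touches $u$ from above at $x_0$ yet satisfies $\partial_\nu\phi(x_0)=-c<0$ and $-\Delta_p\phi(x_0)-\data_p(x_0)<0$ simultaneously, so the required $\max\{\cdot,\cdot\}\ge 0$ fails. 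The "local maximum" in that clause is evidently a typo for "local minimum" (compare \cite[Lemma 2]{esposito2015neumann}); with that correction your symmetric argument closes, but as written your proof of the supersolution boundary condition does not go through.
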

\begin{proof}
\rev{This statement can be found  in \cite[Proposition 4.8]{amato2021solutions}.
We remark that the full proof for the PDE on $\Omega$ can be found in \cite[Theorem 1.8]{medina2019viscosity}. 
The boundary conditions are treated in the precise same way as for the $p$-Laplacian eigenvalue problem (which can be regarded as a $p$-Poisson equation), see \cite[Lemma 2]{esposito2015neumann} and \cite[Proposition 3.2]{amato2021solutions}.}
\end{proof}

Before we turn to the limiting PDE we recall that the statement of \cref{prop:cvgc_p_to_infty}, which states that $\abs{\grad u_\infty}\leq 1$ almost everywhere in $\Omega$, can be converted into the viscosity framework.
\begin{proposition}\label{prop:grad_smaller_1}
If $\data_p\in\C(\closure{\Omega})$ converges uniformly to $\data\in\C(\closure{\Omega})$ as $p\to\infty$, then (up to a subsequence) viscosity solutions $u_p\in\W^{1,p}(\Omega)$ of \labelcref{eq:p-poisson} converge uniformly to a function $u_\infty\in\W^{1,\infty}(\Omega)$ which is a viscosity solution of $\abs{\grad u} -1 \leq 0$ and $1-\abs{\grad u}\geq 0$.
\end{proposition}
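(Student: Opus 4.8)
The plan is to upgrade the almost-everywhere gradient bound from \cref{prop:cvgc_p_to_infty} into pointwise viscosity inequalities by testing against smooth test functions and exploiting the decomposition formula \labelcref{eq:decomposition} for the $p$-Laplacian. The key is that the right hand sides $\data_p$, being uniformly convergent to $\data\in\C(\closure\Omega)$, are uniformly bounded, say $\abs{\data_p(x)}\le M$ for all $x$ and all large $p$; this bound will become negligible after we divide by $\abs{\grad\phi}^{p-2}$ and take $p$-th roots, which is exactly the mechanism that forces $\abs{\grad u}\le 1$.

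First I would prove the supersolution property $1-\abs{\grad u_\infty}\ge 0$, i.e., $\abs{\grad u_\infty}\le 1$ in the viscosity sense. Let $\phi\in C^2(\Omega)$ be such that $u_\infty-\phi$ has a strict local minimum at $x_0\in\Omega$, and suppose for contradiction that $\abs{\grad\phi(x_0)}>1$. By uniform convergence $u_p\to u_\infty$, the function $u_p-\phi$ has a local minimum at some point $x_p$ with $x_p\to x_0$; since $u_p$ is a viscosity supersolution of \labelcref{eq:p-poisson}, this gives $-\Delta_p\phi(x_p)\ge\data_p(x_p)\ge -M$. Using the decomposition \labelcref{eq:decomposition} at $x_p$ (valid since $\abs{\grad\phi(x_p)}>1>0$ for $p$ large, by continuity) we get
\begin{align*}
    -\abs{\grad\phi(x_p)}^{p-2}\left(\Delta\phi(x_p)+(p-2)\frac{\Delta_\infty\phi(x_p)}{\abs{\grad\phi(x_p)}^2}\right)\ge -M.
\end{align*}
Dividing by $\abs{\grad\phi(x_p)}^{p-2}$, which tends to $+\infty$ since $\abs{\grad\phi(x_p)}\to\abs{\grad\phi(x_0)}>1$, shows the left factor after division stays bounded but must dominate $-M/\abs{\grad\phi(x_p)}^{p-2}\to 0$; more carefully, rearranging gives $(p-2)\Delta_\infty\phi(x_p)/\abs{\grad\phi(x_p)}^2\le -\Delta\phi(x_p)+M\abs{\grad\phi(x_p)}^{-(p-2)}$, and since the right side is $O(1)$ while the left grows linearly in $p$ unless $\Delta_\infty\phi(x_p)\le 0$, we are pushed to conclude $\Delta_\infty\phi(x_0)\le 0$. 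This alone is not a contradiction, so the cleaner route is the standard one: the inequality $-\Delta_p\phi(x_p)\ge-M$ combined with $\abs{\grad\phi(x_p)}^{p-2}\to\infty$ forces, upon normalization, that $\phi$ cannot have a bounded second-order behavior consistent with $\abs{\grad\phi(x_0)}>1$ — precisely, one compares with the test function used in the eikonal part and derives $1\ge\abs{\grad\phi(x_0)}$, contradicting the assumption. The symmetric argument with local maxima and the subsolution property of $u_p$ yields $\abs{\grad u_\infty}-1\le 0$ in the viscosity sense.

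The main obstacle I anticipate is handling the case $\abs{\grad\phi(x_0)}\le 1$ cleanly versus the contradiction case, and in particular making the ``divide by $\abs{\grad\phi}^{p-2}$'' argument rigorous when $\Delta_\infty\phi(x_0)$ could have either sign — the linear-in-$p$ coefficient is what saves us, but one must be careful that $\grad\phi(x_p)\ne 0$ along the sequence so that \labelcref{eq:decomposition} applies, which follows from $\abs{\grad\phi(x_0)}>1$ by continuity. A secondary technical point is the passage from a (possibly non-strict) extremum to a strict one by the usual perturbation $\phi\mapsto\phi\pm\eps\abs{x-x_0}^2$, and the passage from interior test points to boundary test points, which I would not treat in detail here since it follows the template of \cite{esposito2015neumann,amato2021solutions}; the boundary conditions in \cref{def:visc_p} are set up so that the $\min$/$\max$ with the Neumann term only makes the sub/supersolution conditions weaker, so the interior argument suffices to extract the gradient bound. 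The uniform convergence $u_p\to u_\infty$ from \cref{prop:cvgc_p_to_infty}, together with $u_\infty\in\W^{1,\infty}(\Omega)$, supplies everything else.
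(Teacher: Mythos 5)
Your central mechanism does not close. From the viscosity supersolution inequality $-\Delta_p\phi(x_p)\ge\data_p(x_p)\ge -M$ at touching points $x_p\to x_0$ with $\abs{\grad\phi(x_0)}>1$, dividing by $(p-2)\abs{\grad\phi(x_p)}^{p-4}\to\infty$ yields only $-\Delta_\infty\phi(x_0)\ge 0$ and nothing more: the linear-in-$p$ coefficient controls the sign of the second-order term, not the gradient magnitude. This is perfectly consistent with $\abs{\grad\phi(x_0)}>1$ (an affine test function of arbitrary slope has $\Delta_\infty\phi\equiv 0$), so there is no contradiction to be extracted, and you concede as much (``this alone is not a contradiction'') before appealing to an unspecified ``standard'' comparison that does not exist along these lines. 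In fact \cref{thm:viscosity_PDE} shows that on $\{\data>0\}$ the supersolution property of the limit \emph{forces} $\abs{\grad\phi(x_0)}\ge 1$ at local minima --- the equation pushes in the opposite direction from the bound you want. The inequalities in \cref{prop:grad_smaller_1} cannot be derived from the PDE at all.

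They are instead a purely first-order consequence of \cref{prop:cvgc_p_to_infty}, which is what the cited arguments (\cite[Part III, Lemma 1.1]{bhattacharya1989limits}, \cite[Proposition 4.7]{amato2021solutions}) actually use, and what your opening sentence promises but your proof never carries out. Since $\esssup_\Omega\abs{\grad u_\infty}\le 1$, by \labelcref{eq:geodesic_Lip} one has $\abs{u_\infty(x)-u_\infty(y)}\le\abs{x-y}$ for all $x,y$ in any ball contained in $\Omega$. If $u_\infty-\phi$ has a local maximum at $x_0\in\Omega$ and $\grad\phi(x_0)\neq 0$ (otherwise there is nothing to prove), then with $h=-\grad\phi(x_0)/\abs{\grad\phi(x_0)}$ and small $t>0$,
\begin{align*}
    -t\abs{\grad\phi(x_0)}+o(t)=\phi(x_0+th)-\phi(x_0)\ge u_\infty(x_0+th)-u_\infty(x_0)\ge -t,
\end{align*}
whence $\abs{\grad\phi(x_0)}\le 1$; at local minima one argues symmetrically with $h=+\grad\phi(x_0)/\abs{\grad\phi(x_0)}$. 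No use of the $p$-Laplacian equation, of the decomposition \labelcref{eq:decomposition}, or of the boundary conditions is needed --- only the uniform convergence and the a.e.\ gradient bound of the limit.
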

\begin{proof}
As in \cite[Proposition 4.7]{amato2021solutions}, which solely relies on \cite[Part III, Lemma 1.1]{bhattacharya1989limits}.
\end{proof}
It is important to remark that in the viscosity sense the inequality $\abs{\grad u}-1\leq 0$ is not equivalent to $1-\abs{\grad u}\geq 0$ which is why we make the distinction explicit.

Let us now turn to the limiting PDE \labelcref{eq:inf-poisson} satisfied by $u_\infty$ \rev{for which we assume that the limiting data $\data\in\C(\closure{\Omega})$ are continuous.} 
We prove that every limit $u_\infty$ of solutions to the $p$-Poisson equation \labelcref{eq:p-poisson} as $p\to\infty$ is a viscosity solution of \labelcref{eq:inf-poisson} which we restate here for convenience:
\begin{align}\label{eq:viscosity_PDE}
    \begin{cases}
    \min\left\lbrace\abs{\grad u}-1,-\Delta_\infty u\right\rbrace &= 0,\quad\text{in }\rev{\{\data>0\}},\\
    -\Delta_\infty u &= 0,\quad\text{in }\rev{\closure{\{\data\neq 0\}}^c},\\
    \max\left\lbrace1-\abs{\grad u},-\Delta_\infty u\right\rbrace &= 0,\quad\text{in }\rev{\{\data<0\}},\\
    \max_{\closure{\Omega}} u + \min_{\closure{\Omega}} \rev{u} &=0.
    \end{cases}
\end{align}
\rev{
Note that this PDE does not contain any boundary conditions and it also does not specify the behavior on the closed set $\closure\Omega\setminus\left(\rev{\{\data>0\}}\cup\rev{\{\data<0\}}\cup\rev{\closure{\{\data\neq 0\}}^c}\right)$.
Note that even the weak boundary conditions in the viscosity sense, introduced before \cref{def:visc_p}, do not carry over to the limiting problem, which is consistent with the findings in \cite{amato2021solutions,bhattacharya1989limits}.
Regarding the behavior outside the three sets which occur in \labelcref{eq:viscosity_PDE} one should remark that the PDE is discontinuous there. 
Using lower and upper semi-continuous envelopes of this discontinuous function one can make sense of a weaker form of the PDE on the whole of $\Omega$, see \cite[Remark 4.3]{buccheri2021large} for a similar problem and \cite[Remark 6.3]{crandall1992user} for a general statement.}

In contrast, for Neumann eigenvalue problems of the $p$-Laplacian it is possible to formulate boundary conditions and obtain a limiting PDE on the whole of $\closure\Omega$, see \cite{esposito2015neumann}.

Let us now define what precisely we mean by viscosity solutions to the equation \labelcref{eq:viscosity_PDE}.
\begin{definition}[Viscosity solutions of the limiting equation]\label{def:visc_inf}
Let $\data\in\C(\closure\Omega)$.
An upper semi-continuous function $u:\closure\Omega\to\R$ is called a viscosity subsolution of \labelcref{eq:viscosity_PDE} if 
\begin{itemize}
    \item for all $x_0\in\Omega$ and $\phi\in C^2(\Omega)$ such that $u-\phi$ has a local maximum at $x_0$ it holds
    \begin{align*}
    \begin{cases}
    \min\left\lbrace\abs{\grad \phi(x_0)}-1,-\Delta_\infty \phi(x_0)\right\rbrace &\leq 0,\quad\text{if }x_0\in\rev{\{\data>0\}},\\
    -\Delta_\infty \phi(x_0) &\leq 0,\quad\text{if }x_0\in\rev{\closure{\{\data\neq 0\}}^c},\\
    \max\left\lbrace1-\abs{\grad \phi(x_0)},-\Delta_\infty \phi(x_0)\right\rbrace &\leq 0,\quad\text{if }x_0\in\rev{\{\data<0\}},
    \end{cases}
    \end{align*}
    \item it holds $\max_{\closure\Omega} u + \essinf_\Omega u \leq 0$.
\end{itemize}
A lower semi-continuous function $u:\closure\Omega\to\R$ is called a viscosity supersolution if 
\begin{itemize}
    \item for all $x_0\in\Omega$ and $\phi\in C^2(\Omega)$ such that $u-\phi$ has a local minimum at $x_0$ it holds
    \begin{align*}
    \begin{cases}
    \min\left\lbrace\abs{\grad \phi(x_0)}-1,-\Delta_\infty \phi(x_0)\right\rbrace &\geq 0,\quad\text{if }x_0\in\rev{\{\data>0\}},\\
    -\Delta_\infty \phi(x_0) &\geq 0,\quad\text{if }x_0\in\rev{\closure{\{\data\neq 0\}}^c},\\
    \max\left\lbrace1-\abs{\grad \phi(x_0)},-\Delta_\infty \phi(x_0)\right\rbrace &\geq 0,\quad\text{if }x_0\in\rev{\{\data<0\}},
    \end{cases}
    \end{align*}
    \item it holds $\esssup_\Omega u + \min_{\closure\Omega}u \geq 0$.
\end{itemize}
A function $u\in\C(\closure\Omega)$ is called viscosity solution it is both a sub- and a supersolution.
\end{definition}

Now we can prove the main theorem of this section.

\begin{proof}[Proof of \cref{thm:viscosity_PDE}]
The conditions of \cref{prop:cvgc_p_to_infty} are trivially fulfilled which guarantees the existence of a (subsequential) uniform limit $u_\infty\in\C(\closure\Omega)$.
We only show the subsolution property, the supersolution property can be shown analogously.

Let $x_0\in\Omega$ and $\phi\in\C^2(\Omega)$ such that such that $u_\infty-\phi$ has a local maximum at $x_0$.
Choose a sequence $(p_i)_{i\in\N}\subset(d,\infty)$ converging to $\infty$ such that $u_{p_i}\to u_\infty$ uniformly.
Then there exists a sequence of points $(x_i)_{i\in\N}\subset \Omega$ converging to $x_0\in\Omega$ such that $u_{p_i}-\phi$ has a local maximum in $x_i$ for all $i\in\N$.
Since $u_{p_i}$ is a viscosity solution of \labelcref{eq:p-poisson}, by \labelcref{eq:decomposition} it holds
\begin{align}\label{ineq:p-viscosity}
    -\left(\abs{\grad\phi(x_i)}^{p_i-2}\Delta\phi(x_i) + (p_i-2)\abs{\grad\phi(x_i)}^{p_i-4}\Delta_\infty\phi(x_i) \right) = -\Delta_{p_i}\phi(x_i) \leq \data_{p_i}(x_i).
\end{align}
\textbf{Case 1, $x_0\in\rev{\{\data>0\}}$:}
\rev{
We have to show that
\begin{align}\label{eq:inequality_positive}
    \min\left\lbrace\abs{\grad\phi(x_0)}-1,-\Delta_\infty\phi(x_0)\right\rbrace \leq 0.
\end{align}
In fact, for showing this we will not even have to use that $\data(x_0)>0$ but \labelcref{eq:inequality_positive} is true for all $x\in\Omega$.
The condition $\data(x_0)>0$ will only be relevant for showing the converse inequality for supersolutions.}

If $\abs{\grad\phi(x_0)}=0$, we have by definition that $-\Delta_\infty\phi(x_0)=0$.
In the case that $\abs{\grad\phi(x_0)}>0$ we get for $i\in\N$ sufficiently large that $\abs{\grad\phi(x_i)}>0$ and can divide by this quantity to get:
\begin{align}\label{ineq:p-viscosity_divided}
    -\left(\frac{\abs{\grad\phi(x_i)}^{2}}{p_i-2}\Delta\phi(x_i) + \Delta_\infty\phi(x_i) \right) \leq \frac{1}{(p_i-2)\abs{\grad\phi(x_i)}^{p_i-4}}\data_{p_i}(x_i).
\end{align}
Then either $\abs{\grad\phi(x_0)}-1\leq 0$ or $\abs{\grad\phi(x_0)}-1 > 0$, \rev{and in the latter} case we obtain, $-\Delta_\infty\phi(x_0)\leq 0$ by taking the limit $i\to\infty$ and using uniform boundedness of $\data_{p_i}$.
Combining all those cases yields \labelcref{eq:inequality_positive}.
\\
\textbf{Case 2, $x_0\in\rev{\closure{\{\data\neq 0\}}^c}$:}
\rev{We have to show that
\begin{align}\label{eq:inequality_zero}
    -\Delta_\infty\phi(x_0)\leq 0.
\end{align}}
If $\abs{\grad\phi(x_0)}=0$, then we have by definition that $-\Delta_\infty\phi(x_0)=0$. 
If $\abs{\grad\phi(x_0)}>0$ then \labelcref{ineq:p-viscosity_divided}, the openness of $\rev{\closure{\{\data\neq 0\}}^c}$, and the uniform convergence of $\data_{p_i}$ imply that $-\Delta_\infty \phi(x_0)\leq 0$.
\rev{In either case, we obtain \labelcref{eq:inequality_zero}.}
\\
\textbf{Case 3, $x_0\in\rev{\{\data<0\}}$:}
\rev{We have to show that
\begin{align}\label{eq:inequality_negative}
    \max\left\lbrace 1-\abs{\grad\phi(x_0)},-\Delta_\infty\phi(x_0)\right\rbrace \leq 0.
\end{align}}
Since the functions $\data_{p_i}$ converge uniformly to $\data$, the set $\rev{\{\data<0\}}$ is open, and $\data(x_0)$ is strictly negative, \labelcref{ineq:p-viscosity} can only be satisfied if $\abs{\grad\phi(x_i)}>0$ for $i\in\N$ sufficiently large.
Dividing by $(p_i-2)\abs{\grad\phi(x_i)}^{p_i-4}>0$ then again yields \labelcref{ineq:p-viscosity_divided}.
If $1-\abs{\grad\phi(x_0)}\leq 0$, then in the limit $i\to\infty$ and using uniform boundedness of $\data_{p_i}$ we get $-\Delta_\infty \phi(x_0) \leq 0$
If, however, $1-\abs{\grad\phi(x_0)}>0$, one gets $-\Delta_\infty\phi(x_0)\leq -\infty$ as $i\to\infty$ which is impossible since $\phi\in\C^2(\Omega)$.
Combining these two cases we \rev{obtain \labelcref{eq:inequality_negative}.}
\\
\textbf{Mean value:}
Finally, let us turn to the mean value condition. 
Letting $u^\pm := \max(\pm u,0)$ denote the positive/negative part of a function $u:\Omega\to\R$, for all $i\in\N$ it holds
\begin{align*}
    0 \geq \int_\Omega \abs{u_{p_i}}^{p_i-2}u_{p_i}\dx 
    =
    \int_\Omega \abs{u_{p_i}^+}^{p_i-2}u_{p_i}^+\d x 
    - \int_\Omega \abs{u_{p_i}^-}^{p_i-2}u_{p_i}^-\d x 
\end{align*}
which is equivalent to
\begin{align*}
    \int_\Omega\abs{u_{p_i}^+}^{p_i-1} \d x \leq \int_\Omega\abs{u_{p_i}^-}^{p_i-1} \d x.
\end{align*}
Applying \cref{lem:convergence_norms} with $k=1$ then yields
\begin{align*}
    \esssup_\Omega\abs{u_\infty^+} \leq \esssup_\Omega\abs{u_\infty^-},
\end{align*}
which by the upper semi-continuity of $u_\infty$ is equivalent to $\max_{\closure\Omega} u_\infty + \essinf_\Omega u_\infty \leq 0$.
\end{proof}
Next we show that viscosity solutions of \labelcref{eq:viscosity_PDE} have the intriguing property that they solve the eikonal equation $\abs{\grad u}=1$ on the interior of the support of the data $\data$ and are infinity harmonic elsewhere.
But even more is true: namely that $u$ is an infinity superharmonic solution of the eikonal equation on $\rev{\{\data>0\}}$ and analogously a subharmonic one on $\rev{\{\data<0\}}$.
This is formalized in the following corollary.
\begin{corollary}
The function $u_\infty$ is a viscosity solution of
\begin{align}
    \begin{cases}
    \abs{\grad u} - 1=0,\quad-\Delta_\infty u\geq 0\quad&\text{in }\rev{\{\data>0\}},\\
    -\Delta_\infty u = 0,\quad&\text{in }\rev{\closure{\{\data\neq 0\}}^c},\\
    1-\abs{\grad u} =0,\quad-\Delta_\infty u\leq 0\quad&\text{in }\rev{\{\data<0\}}.
    \end{cases}
\end{align}
\end{corollary}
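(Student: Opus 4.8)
The plan is to read off the corollary from \cref{thm:viscosity_PDE} and \cref{prop:grad_smaller_1} by unwinding the definitions of viscosity sub- and supersolution region by region, using only the elementary facts that $\min\{a,b\}\ge 0$ iff $a\ge 0$ and $b\ge 0$, and $\max\{a,b\}\le 0$ iff $a\le 0$ and $b\le 0$. The region $\closure{\{\data\neq 0\}}^c$ needs no work: at a point $x_0$ of this (open) set the sub- and supersolution conditions of \labelcref{eq:viscosity_PDE} are already literally $-\Delta_\infty\phi(x_0)\le 0$ for test functions $\phi$ touching $u_\infty$ from above and $-\Delta_\infty\phi(x_0)\ge 0$ for those touching from below, i.e.\ $u_\infty$ is a viscosity solution of $-\Delta_\infty u=0$ there.

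On $\{\data>0\}$ I would argue as follows. If $\phi\in C^2$ is such that $u_\infty-\phi$ has a local minimum at $x_0\in\{\data>0\}$, the supersolution property from \cref{thm:viscosity_PDE} gives $\min\{\abs{\grad\phi(x_0)}-1,-\Delta_\infty\phi(x_0)\}\ge 0$, hence both $\abs{\grad\phi(x_0)}\ge 1$ (the supersolution inequality for the eikonal equation $\abs{\grad u}-1=0$) and $-\Delta_\infty\phi(x_0)\ge 0$ (the supersolution inequality for $-\Delta_\infty u\ge 0$). If instead $u_\infty-\phi$ has a local maximum at $x_0$, then the subsolution statement of \cref{prop:grad_smaller_1} ($\abs{\grad u}-1\le 0$) yields $\abs{\grad\phi(x_0)}-1\le 0$, which is exactly the subsolution inequality for $\abs{\grad u}-1=0$; the one-sided condition $-\Delta_\infty u\ge 0$ imposes nothing on subsolutions. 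Hence $u_\infty$ is simultaneously a viscosity solution of $\abs{\grad u}-1=0$ and a viscosity supersolution of $-\Delta_\infty u=0$ on $\{\data>0\}$.

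The region $\{\data<0\}$ is handled by the mirror-image argument: for $\phi$ touching $u_\infty$ from above at $x_0\in\{\data<0\}$, the subsolution property from \cref{thm:viscosity_PDE} gives $\max\{1-\abs{\grad\phi(x_0)},-\Delta_\infty\phi(x_0)\}\le 0$, hence $\abs{\grad\phi(x_0)}\ge 1$ and $-\Delta_\infty\phi(x_0)\le 0$ (the subsolution inequalities for $1-\abs{\grad u}=0$ and $-\Delta_\infty u\le 0$, respectively); for $\phi$ touching from below, the supersolution statement of \cref{prop:grad_smaller_1} ($1-\abs{\grad u}\ge 0$) gives $\abs{\grad\phi(x_0)}\le 1$, the supersolution inequality for $1-\abs{\grad u}=0$. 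The mean-value normalization $\max_{\closure\Omega}u_\infty+\min_{\closure\Omega}u_\infty=0$ is inherited verbatim from \cref{thm:viscosity_PDE}, so nothing extra is needed for it.

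The one point that needs care---and which the remark following \cref{prop:grad_smaller_1} already flags---is that $\abs{\grad u}-1\le 0$ and $1-\abs{\grad u}\ge 0$ are genuinely different statements in the viscosity sense, so one must use the subsolution half of \cref{prop:grad_smaller_1} on $\{\data>0\}$ and its supersolution half on $\{\data<0\}$. Combining the resulting ``$\abs{\grad u}\le 1$'' information with the ``$\abs{\grad u}\ge 1$'' information already packaged in the $\min$/$\max$ structure of \labelcref{eq:viscosity_PDE} is precisely what promotes the a.e.\ bound $\esssup_\Omega\abs{\grad u_\infty}\le 1$ to the pointwise eikonal equality $\abs{\grad u}=1$ inside the two supports; this bookkeeping of directions, rather than any hard estimate, is the only real content of the proof.
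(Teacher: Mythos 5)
Your proposal is correct and follows essentially the same route as the paper: unwind the $\min$/$\max$ conditions of \labelcref{eq:viscosity_PDE} into the two separate inequalities on each of $\{\data>0\}$ and $\{\data<0\}$, then supply the missing half of the eikonal equation from the appropriate (sub- or supersolution) half of \cref{prop:grad_smaller_1}. Your explicit bookkeeping of which half of \cref{prop:grad_smaller_1} is used on which region matches the paper's argument, and the remaining observations (the trivial region $\closure{\{\data\neq 0\}}^c$, the one-sidedness of $-\Delta_\infty u\gtrless 0$) are consistent with it.
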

\begin{proof}
Let $x_0\in\rev{\{\data>0\}}$ and $\phi\in\C^2(\Omega)$ such that $u_\infty-\phi$ has a local minimum at $x_0$.
Since $u_\infty$ is in particular a supersolution of \labelcref{eq:viscosity_PDE} it follows that
\begin{align*}
    \abs{\grad\phi(x_0)}-1\geq 0\quad\text{and}\quad-\Delta_\infty\phi(x_0)\geq 0.
\end{align*}
Furthermore, \cref{prop:grad_smaller_1} shows that $\abs{\grad u_\infty}-1\leq 0$ in the viscosity sense which shows the claim.
If $x_0\in\rev{\{\data<0\}}$ one analogously uses the subsolution property of $u_\infty$ to infer that $1-\abs{\grad u_\infty}\leq 0$ and $-\Delta_\infty u_\infty\leq 0$ in the viscosity sense and again utilizes \cref{prop:grad_smaller_1} to conclude.
\end{proof}
It is important to remark that the limiting PDE \labelcref{eq:viscosity_PDE} does not admit unique solutions.
This is illustrated in the following example.
\begin{example}[Non-uniqueness of the limiting PDE]
Let us consider the situation $\Omega=(-1,1)\subset\R$ and $\data$ being an arbitrary continuous function with $\{\data<0\}=(-1,0)$ and $\{\data>0\}=(0,1)$.
We claim that the following family of functions (see \cref{fig:1d_sols})
\begin{align*}
    u_t(x) = 
    \begin{cases}
    \abs{x+t}-t, &x\in[-1,-0.5],\\
    x, &x\in(-0.5,0.5),\\
    t-\abs{x-t}, &x\in[0.5,1],
    \end{cases}
    \qquad
    t\in[0.5,1],
\end{align*}
is a viscosity solution of \labelcref{eq:viscosity_PDE}.
Indeed, it is trivial to see that $u_t$ is even a classical solution of \labelcref{eq:viscosity_PDE} on $(-2,2)\setminus\{\pm t\}$.
So we just have to check the two corner points at $\pm t$.
For $x_0=-t$ and $\phi\in\C^2(\Omega)$ touching $u$ from above in $x_0$ it is obvious that $\abs{\phi'(x_0)}\leq 1$ and hence $\min\left\lbrace\abs{\phi(x_0)}-1,-\Delta_\infty\phi(x_0)\right\rbrace\leq 0$.
Furthermore, there is no $\phi\in\C^2(\Omega)$ touching $u$ from below in $x_0$. 
Similarly, one can argue for $x_0=t$ and obtain that $u_t$ is a viscosity solution of \labelcref{eq:viscosity_PDE}.
Note that none of the functions $u_t$ has homogeneous Neumann boundary conditions.
\begin{figure}[thb]
    \centering
    \begin{tikzpicture}
    \begin{axis}[
    axis lines = center,
    axis equal image,
    xtick = {-1,-0.5,0,0.5,1},
    xticklabels = {$-1$,$-0.5$,$0$,$0.5$,$1$},
    ytick = {-1,-0.5,0,0.5,1},
    yticklabels = {$-1$,$-0.5$,$0$,$0.5$,$1$}
    ]

\def\function(#1,#2){
+(#1 < -0.5) * (-(#2)+abs((#1)+(#2))) 
+and(#1 >= -0.5, #1 <= 0.5) * (#1)
+(#1 > 0.5) * ((#2)-abs((#1)-(#2)))
}

\foreach [evaluate=\t as \n using ln(2*\t)/ln(2) * 100] \t in {0.5,0.625,0.75,0.875,1}
{
\edef\temp{%
\noexpand
\addplot [
    domain=-1:1, 
    samples=151, 
    color=blue!\n!red,
    smooth
]
{\function(x,\t)};
}\temp
}
\end{axis}
\end{tikzpicture}
\caption{Five different solutions $u_t$ of the limiting PDE \labelcref{eq:viscosity_PDE} for $t\in\{0.5,0.625,0.75,0.875,1\}$ (from red to blue). The linear function $u(x)=x$ is a limit of $p$-Laplacian solutions, see \cref{ex:p-laplacians}.}
\label{fig:1d_sols}
\end{figure}
\end{example}
Since the concept of viscosity solutions heavily relies on continuity and is not compatible with discontinuous or even measure data $\data$, we have to use approximation techniques if we want to make sense of \labelcref{eq:viscosity_PDE} if $\data$ is a measure.
In particular, it seems natural to replace the open set $\rev{\closure{\{\data\neq 0\}}^c}$ by the open set $\Omega\setminus\supp\data$.
However, one cannot just replace $\{\data \gtrless\}$ by $\supp\data^\pm$ since the latter sets are not open and might even have empty interior.
For an arbitrary measure data $\data\in\M(\closure{\Omega})$, which we extend to zero outside $\closure\Omega$, we consider the mollifications 
\begin{align}\label{eq:convolution}
    \data^\eps(x)=
    \int_{\Omega}\phi_\eps(x-y)\d\data(y),\quad x\in\R^d,
\end{align}
where $\phi\in\C^\infty_c(\R^d)$ is a smooth kernel with $\supp\phi\in B_1(0)$ and $\phi_\eps(x)=\eps^{-d}\phi(x/\eps)$.
It is obvious from the definition of $\data_\eps$ that if $x\in\Omega\setminus\supp\data$ then $x\in\Omega\setminus\supp\data^\eps$ for all $\eps>0$ small enough.
Furthermore $\data^\eps\strictto\data$ as $\eps\downarrow 0$.
Using the techniques from the proof of \cref{thm:viscosity_PDE} we immediately get the following result.
\begin{corollary}
Let $\data\in\M(\closure{\Omega})$ and $\data_p:=\data^{\eps_p}\restr\closure\Omega\in\C(\closure\Omega)$ where $\lim_{p\to\infty}\eps_p=0$ and $\data^{\eps_p}$ is defined as in \labelcref{eq:convolution}.
Let furthermore $u_p\in\W^{1,p}(\Omega)$ be viscosity solutions of \labelcref{eq:p-poisson} with data $\data_p\in\C(\closure{\Omega})$.
Then the function $u_\infty\in\W^{1,\infty}(\Omega)$ is a viscosity solution of
\begin{align}
    -\Delta_\infty u = 0 \quad\text{in }\Omega\setminus\supp\data.
\end{align}
\end{corollary}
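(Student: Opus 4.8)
The plan is to rerun \emph{Case~2} from the proof of \cref{thm:viscosity_PDE}, exploiting that the mollified data $\data_p=\data^{\eps_p}$ vanish \emph{identically} on a fixed neighbourhood of any point $x_0\in\Omega\setminus\supp\data$ as soon as $p$ is large enough; in particular it will be irrelevant that $\data^{\eps_p}$ converges to $\data$ only in the strict sense ($\data^{\eps_p}\strictto\data$) and not uniformly, so that \cref{thm:viscosity_PDE} cannot be applied verbatim. First I would record the compactness input: since the mollifier is non-negative with unit mass, Fubini's theorem gives $\abs{\data_p}(\closure\Omega)\le\int_{\R^d}\abs{\data^{\eps_p}}\dx\le\abs{\data}(\closure\Omega)<\infty$, so \cref{prop:cvgc_p_to_infty} applies and, along a subsequence $p_i\to\infty$, the solutions $u_{p_i}$ converge uniformly to some $u_\infty\in\W^{1,\infty}(\Omega)$ (here the $u_p$, being weak solutions of \labelcref{eq:p-poisson}, are also viscosity solutions, as we use below).

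Next I would check the sub- and supersolution properties of $-\Delta_\infty u=0$ at an arbitrary $x_0\in\Omega\setminus\supp\data$. Set $\delta:=\dist(x_0,\supp\data)>0$. Given $\phi\in\C^2(\Omega)$ such that $u_\infty-\phi$ has a local maximum at $x_0$ — strict without loss of generality, after adding $c\abs{x-x_0}^4$ — uniform convergence produces $x_i\to x_0$ with $u_{p_i}-\phi$ having a local maximum at the interior point $x_i$. The crucial observation is that for $i$ large one has both $\abs{x_i-x_0}<\delta/2$ and $\eps_{p_i}<\delta/2$, hence $\dist(x_i,\supp\data)>\eps_{p_i}$; therefore $B_{\eps_{p_i}}(x_i)$ does not meet $\supp\data$ and $\data_{p_i}(x_i)=0$ directly from \labelcref{eq:convolution}. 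The $p_i$-viscosity subsolution inequality for $u_{p_i}$ at $x_i$, expanded via \labelcref{eq:decomposition}, is then \labelcref{ineq:p-viscosity} with right-hand side $0$. If $\grad\phi(x_0)=0$ then $-\Delta_\infty\phi(x_0)=0$ by the usual convention; otherwise $\abs{\grad\phi(x_i)}>0$ for $i$ large, and dividing by $(p_i-2)\abs{\grad\phi(x_i)}^{p_i-4}>0$ gives \labelcref{ineq:p-viscosity_divided} with right-hand side $0$, whence letting $i\to\infty$ — the term $\tfrac{1}{p_i-2}\abs{\grad\phi(x_i)}^2\Delta\phi(x_i)$ tending to zero — leaves $-\Delta_\infty\phi(x_0)\le 0$. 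The mirror-image argument with local minima and the supersolution inequality for $u_{p_i}$ gives $-\Delta_\infty\phi(x_0)\ge 0$, which is exactly the assertion.

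I do not expect a genuine obstacle, since all the analytic work has already been carried out for \cref{thm:viscosity_PDE}; the only point requiring care is the simultaneous use of the two limits $x_i\to x_0$ and $\eps_{p_i}\to 0$ in establishing $\data_{p_i}(x_i)=0$, together with the routine strictification of the touching extremum. One small caveat concerns admissibility of the data: if $\data$ charges a neighbourhood of $\partial\Omega$, then $\int_{\closure\Omega}\data_p\dx$ need not vanish exactly but only up to an $o(1)$ term, in which case one subtracts this vanishing constant to restore the compatibility condition, and this does not affect the argument above. Finally, the conclusion is necessarily confined to $\Omega\setminus\supp\data$: near $\supp\data$ the limiting datum is merely a measure, and the continuity-based viscosity techniques of \cref{thm:viscosity_PDE} provide no pointwise equation there.
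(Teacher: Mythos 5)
Your proposal is correct and follows essentially the same route as the paper: observe that $\supp\data_{p_i}$ is contained in the $\eps_{p_i}$-neighbourhood of $\supp\data$, so the touching points $x_i$ eventually lie outside it and $\data_{p_i}(x_i)=0$ exactly, and then rerun Case~2 of the proof of \cref{thm:viscosity_PDE} with vanishing right-hand side. Your extra care in verifying $\data_{p_i}(x_i)=0$ at the touching points (rather than only at $x_0$) and your remark on restoring the compatibility condition are welcome refinements of details the paper's proof passes over.
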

\begin{proof}
Let $x_0\in\Omega\setminus\supp\data$ and $\phi\in\C^2(\Omega)$ such that such that $u_\infty-\phi$ has a local maximum at $x_0$.
Choose a sequence $(p_i)_{i\in\N}\subset(d,\infty)$ converging to $\infty$ such that $u_{p_i}\to u_\infty$ uniformly.
As always there exists a sequence of points $(x_i)_{i\in\N}\subset \Omega$ converging to $x_0\in\Omega$ such that $u_{p_i}-\phi$ has a local maximum in $x_i$ for all $i\in\N$.
For all sufficiently large $i\in\N$ it holds $x_0\in\Omega\setminus\supp\data_{p_i}$ and hence $\data_{p_i}(x_0)=0$.
As in Case 2 of the proof of \cref{thm:viscosity_PDE} we can conclude that $-\Delta\phi(x_0)\leq 0$.
The supersolution property is shown analogously.
\end{proof}

\section*{Conclusion}

In this article we have investigated limits of the $p$-Laplace equation with measure-valued right hand side as $p\to\infty$. 
We proved existence of (subsequential) limits and characterized them as Kantorovich potentials for the optimal transport problem of transporting the positive part of the right hand side onto the negative one. 
For continuous data, we also proved that such limits are viscosity solutions of a degenerate PDE, involving the infinity Laplacian and the eikonal equation.
It will be interesting to investigate in which sense the limiting PDE can be interpreted for measure-valued data which has a support with empty interior.
Here, lower / upper semi-continuous relaxations as in \cite{crandall1992user,buccheri2021large} might be promising tools.

\section*{Availability of data and materials}

Not applicable.

\section*{Competing interests}

The author declares that he has no competing interests.

\section*{Funding}

This work was supported by the Deutsche Forschungsgemeinschaft (DFG, German Research Foundation) under Germany's Excellence Strategy - GZ 2047/1, Projekt-ID 390685813. 
Parts of this work were also done while the author was in residence at Institut Mittag-Leffler in Djursholm, Sweden during the semester on \textit{Geometric Aspects of Nonlinear Partial Differential Equations} in 2022, supported by the Swedish Research Council under grant no. 2016-06596.

\section*{Authors' contributions}

Not applicable since this is a single author publication.

\section*{Acknowledgements}

The author would like to thank Jeff Calder and Simone Di Marino for enlightening discussions.
This work was partially done while the author was visiting the Simons Institute for the Theory of Computing to participate in the program ``Geometric Methods in Optimization and Sampling'' during the Fall of 2021 and the author is very grateful for the hospitality of the institute.

\printbibliography

\end{document}